\documentclass[12pt,a4paper]{amsart}

\newtheorem{theorem}{Theorem}
\newtheorem{corollary}[theorem]{Corollary}

\newtheorem{proposition}[theorem]{Proposition}

\setlength{\textheight}{22.3cm}
\addtolength{\hoffset}{-1,5cm}
\addtolength{\textwidth}{3cm}
\addtolength{\voffset}{-1,5cm}
\addtolength{\textheight}{1cm}

\usepackage{amsfonts}
\usepackage[figuresright]{rotating}
\usepackage{amssymb}
\usepackage{amsmath}
\usepackage{amsmath}
\usepackage{graphics}
\title{Permutations of type $B$ with fixed number of descents and minus signs}
\author{Katarzyna Kril}
\author{Wojciech M{\l}otkowski}
\thanks{
W.~M. is supported by the Polish
National Science Center grant No. 2016/21/B/ST1/00628.}
\address{Instytut Matematyczny,
Uniwersytet Wroc{\l}awski,
Plac~Grunwaldzki~2/4,
50-384 Wroc{\l}aw, Poland}
\email{katarzyna.kril03@gmail.com}
\email{mlotkow@math.uni.wroc.pl}
\subjclass[2010]{Primary 05A05; Secondary  20B35}
\keywords{Descents in permutations, Eulerian numbers, permutations of type B}


\begin{document}

\begin{abstract}
We study three dimensional array of numbers $B(n,k,j)$,
$0\le j,k\le n$, where $B(n,k,j)$ is the number of type $B$
permutations of order $n$ with $k$ descents and $j$ minus signs.
We prove in particular, that $b(n,k,j):=B(n,k,j)/\binom{n}{j}$
is an integer and provide two combinatorial interpretations for these numbers.
\end{abstract}

\maketitle

\section*{Introduction}

Let $B(n,k,j)$ denote the number of type $B$ permutations $(0,\sigma_1,\ldots,\sigma_n)$
which have $k$ descents and $j$ minus signs.
We study properties of the three-dimensional array $B(n,k,j)$, $0\le j,k\le n$.
Some of these properties appear in the work of Brenti \cite{brenti}.
In particular he computed the three-variable generating function
and proved real rootedness of some linear combinations
of the polynomials $P_{n,j}(x):=\sum_{k=0}^{n}B(n,k,j)x^k$
(Corollary~3.7 in \cite{brenti}, see also Corollary~6.9 in \cite{branden}).
Here we will prove that the numbers $b(n,k,j):=B(n,k,j)/\binom{n}{j}$ are also integers.
We provide two combinatorial interpretations of them.

For a subset $U\subseteq\{1,\ldots,n\}$ and $0\le k\le n$
let $\mathcal{B}_{n,k,U}$ denote the family of all
type $B$ permutations $\sigma=(0,\sigma_1,\ldots,\sigma_n)$ that $\sigma$ has $k$ descents
and satisfy: $\sigma_i<0$ iff $|\sigma_i|\in U$.
We will show (Theorem~\ref{theorembnku}) that the cardinality of $\mathcal{B}_{n,k,U}$
is $b(n,k,|U|)$.

Conger \cite{conger2010,congerthesis} defined the refined Eulerian number
$\left\langle n\atop k\right\rangle_{j}$ as the cardinality of the set
$\mathcal{A}_{n,k,j}$
of all type $A$ permutations $\tau=(\tau_1,\ldots,\tau_{n})$ such that $\tau_1=j$ and
$\tau$ has $k$ descents.
He proved many interesting properties of these numbers,
like direct formula, asymptotic behavior, lexicographic unimodality, formula for the generating function
and real rootedness of the corresponding polynomials.
It turns out that for $0\le j,k\le n$ we have $b(n,k,j)=\left\langle n+1\atop k\right\rangle_{j+1}$.
We will prove this equality
providing a bijection $\mathcal{A}_{n+1,k,j+1}\to\mathcal{B}_{n,k,U}$,
where $U=\{1,\ldots,j\}$ (Theorem~\ref{theorembijection}).
The array $b(n,k,1)$, $1\le k\le n$, appears in OEIS \cite{oeis} as A120434.
It also counts permutations $\sigma\in\mathcal{A}_{n}$
which have $k-1$ big descents, i.e. such descents $\sigma_i>\sigma_{i+1}$ that $\sigma_{i}-\sigma_{i+1}\ge2$.

Conger proved that the polynomials $p_{n,j}(x):=\sum_{k=0}^{n} b(n,k,j)x^k$
have only real roots  (Theorem~5 in \cite{conger2010}). Br\"{a}nd\'{e}n \cite{branden2}
showed something stronger: for every $n\ge1$
the sequence of polynomials $\{p_{n,j}(x)\}_{j=0}^{n}$
is interlacing, in particular for every $c_0,c_1,\ldots,c_n\ge0$
the polynomial $c_{0}p_{n,0}(x)+c_{1}p_{n,1}(x)+\ldots+c_{n}p_{n,n}(x)$
has only real roots.
Here we remark, that $P_{n,j}(x)=\binom{n}{j}p_{n,j}(x)$,
so the polynomials $P_{n,j}(x)$ admit the same property,
which is a generalization of Corollary~3.7 in \cite{brenti}
and of Corollary~6.9 in \cite{branden}.

\section{Preliminaries}

For a sequence $(a_0,\ldots,a_s)$, $a_i\in\mathbb{R}$, the \textit{number of descents},
denoted $\mathrm{des}(a_0,\ldots,a_s)$,
is defined as the cardinality of the set
$\big\{i\in\{1,\ldots,s\}:a_{i-1}>a_{i}\big\}.$
We will use the \textit{Iverson bracket:} $[p]:=1$ if the statement $p$
is true and $[p]:=0$ otherwise, see~\cite{gkp}.

Denote by $\mathcal{A}_{n}$ the group of permutations of the set $\{1,\ldots,n\}$.
We will identify $\sigma\in\mathcal{A}_{n}$ with the sequence $(\sigma_1,\ldots,\sigma_n)$
(we will usually write $\sigma_k$ instead of $\sigma(k)$).
For $0\le k\le n$ we define $\mathcal{A}_{n,k}$ as the set of those $\sigma\in\mathcal{A}_{n}$
such that the sequence $(\sigma_1\ldots,\sigma_{n})$ has $k$ descents.
Then the classical \textit{type $A$ Eulerian number} $A(n,k)$ (see entry A123125 in OEIS) is defined as the cardinality of $\mathcal{A}_{n,k}$.
We have the following recurrence relation:
\begin{equation}\label{arecurrence}
A(n, k)=(n-k)A(n-1,k-1)+(k+1)A(n-1,k)
\end{equation}
for $0<k<n$, with the boundary conditions:
$A(n,0)=1$ for $n\geq0$ and $A(n,n)=0$ for $n\geq1$.
These numbers can be expressed as:
\begin{equation}\label{aformula}
A(n,k)=\sum_{i=0}^{k}(-1)^{k-i}\binom{n+1}{k-i}(i+1)^{n}.
\end{equation}

For the Eulerian polynomials
\[
P^{\mathrm{A}}_{n}(t):=\sum_{k=0}^{n}A(n, k)t^k
\]
the exponential generating function is equal to
\begin{equation}\label{agenfunction}
f^{\mathrm{A}}(t,z):=
\sum_{n=0}^{\infty}\frac{P^{\mathrm{A}}_{n}(t)}{n!}z^n
=\frac{(1-t)e^{(1-t)z}}{1-t e^{(1-t)z}}.
\end{equation}

By $\mathcal{B}_{n}$ we will denote the group of such permutations $\sigma$ of the set
\[
\{-n,\ldots,-1,0,1,\ldots,n\}
\]
such that $\sigma$ is odd, i.e. $\sigma(-k)=-\sigma(k)$
for every $-n\le k\le n$. Then $|\mathcal{B}_{n}|=2^{n}n!$.
We will identify $\sigma\in\mathcal{B}_{n}$
with the sequence $(0,\sigma_1,\ldots,\sigma_n)$.
For $\sigma\in\mathcal{B}_{n}$ we define $\mathrm{des(\sigma)}$ (resp. $\mathrm{neg}(\sigma)$) as the number
of descents (resp. of negative numbers) in the sequence $(0,\sigma_1,\ldots,\sigma_n)$.
For $0\le k,j\le n$ we define sets
\begin{align*}
\mathcal{B}_{n,k}&:=\{\sigma\in\mathcal{B}_{n}:\mathrm{des}(\sigma)=k\},\\
\mathcal{B}_{n,k,j}&:=\{\sigma\in\mathcal{B}_{n}:\mathrm{des}(\sigma)=k,\mathrm{neg}(\sigma)=j\},
\end{align*}
and the numbers $B(n,k):=|\mathcal{B}_{n,k}|$ (\textit{type $B$ Eulerian numbers}, see entry A060187 in OEIS),
$B(n,k,j):=|\mathcal{B}_{n,k,j}|$.
The numbers $B(n,k)$
satisfy the following recurrence relation:
\begin{equation}\label{brecurrence}
B(n,k)=(2n-2k+1)B(n-1,k-1)+(2k+1)B(n-1,k),
\end{equation}
$0<k<n$, with the boundary conditions $B(n,0)=B(n,n)=1$,
and can be expressed as
\begin{equation}
B(n,k)=\sum_{i=0}^{k}(-1)^{k-i}\binom{n+1}{k-i}(2i+1)^{n}.
\end{equation}

The type $B$ Eulerian polynomials are defined by
\[
P^{\mathrm{B}}_{n}(t):=\sum_{k=0}^{n}B(n, k)t^k,
\]
and the corresponding exponential generating function is equal to
\begin{equation}\label{bbgeneratingfunction}
f^{\mathrm{B}}(t,z)
:=\sum_{n=0}^{\infty}\frac{P^{\mathrm{B}}_n(t)}{n!}z^n
=\frac{(1-t)e^{(1-t)z}}{1-t e^{2(1-t)z}}.
\end{equation}

\section{Descents and signs in type $B$ permutations}\label{sectioncapitalb}

This section is devoted to the numbers $B(n,k,j):=|\mathcal{B}_{n,k,j}|$.
First we observe the following symmetry.

\begin{proposition}
For $0\le j,k\le n$ we have
\begin{equation}\label{bbnkjsymmetry}
B(n,k,j)=B(n,n-k,n-j).
\end{equation}
\end{proposition}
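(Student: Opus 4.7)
The plan is to exhibit an explicit bijection $\phi\colon \mathcal{B}_{n,k,j}\to\mathcal{B}_{n,n-k,n-j}$, namely the entrywise negation
\[
\phi(\sigma)_i := -\sigma_i \qquad (i=1,\ldots,n).
\]
Since $\phi$ is clearly an involution on $\mathcal{B}_n$, it is automatically a bijection, so everything reduces to verifying that $\phi$ maps the statistics $\mathrm{des}$ and $\mathrm{neg}$ as claimed.

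First I would handle the sign statistic. Because every entry $\sigma_i$ is nonzero, $\phi(\sigma)_i$ is negative precisely when $\sigma_i$ is positive, and therefore
\[
\mathrm{neg}(\phi(\sigma))=n-\mathrm{neg}(\sigma).
\]
So the map sends permutations with $j$ negative entries to those with $n-j$ negative entries.

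Next I would handle the descent statistic by comparing descents of $\sigma$ and $\phi(\sigma)$ position by position in the augmented sequences $(0,\sigma_1,\ldots,\sigma_n)$ and $(0,-\sigma_1,\ldots,-\sigma_n)$. For $i\ge 2$, the inequality $-\sigma_{i-1}>-\sigma_i$ holds iff $\sigma_{i-1}<\sigma_i$, and since consecutive entries of $\sigma$ are distinct nonzero integers, exactly one of ``descent at $i$ in $\sigma$'' or ``descent at $i$ in $\phi(\sigma)$'' occurs. The same dichotomy holds at $i=1$: a descent at position $1$ in $\sigma$ means $\sigma_1<0$, whereas a descent at position $1$ in $\phi(\sigma)$ means $\sigma_1>0$, and exactly one of these is true because $\sigma_1\ne0$. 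Hence the descent sets of $\sigma$ and $\phi(\sigma)$ partition $\{1,\ldots,n\}$, giving $\mathrm{des}(\phi(\sigma))=n-\mathrm{des}(\sigma)$.

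There is no real obstacle here; the only point requiring care is the position $i=1$ boundary case, where one must remember that the descent count in a type $B$ permutation includes the initial slot $(0,\sigma_1)$. Combining the two identities yields $\phi(\mathcal{B}_{n,k,j})=\mathcal{B}_{n,n-k,n-j}$, which establishes \eqref{bbnkjsymmetry}.
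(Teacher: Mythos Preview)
Your argument is correct and is exactly the approach taken in the paper: the bijection is the entrywise negation $(0,\sigma_1,\ldots,\sigma_n)\mapsto(0,-\sigma_1,\ldots,-\sigma_n)$. You simply supply more detail than the paper does in checking that this map sends $\mathcal{B}_{n,k,j}$ onto $\mathcal{B}_{n,n-k,n-j}$.
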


\begin{proof}
It is sufficient to note that the map
\[
(0,\sigma_1,\ldots,\sigma_n)\mapsto(0,-\sigma_1,\ldots,-\sigma_n)
\]
is a bijection of $\mathcal{B}_{n,k,j}$ onto $\mathcal{B}_{n,n-k,n-j}$.
\end{proof}

Now we provide two summation formulas.

\begin{proposition}
\begin{align}
\sum_{j=0}^{n}B(n,k,j)&=B(n,k),\label{bbnkjsum1}\\
\sum_{k=0}^{n}B(n,k,j)&=\binom{n}{j}n!.\label{bbnkjsum2}
\end{align}
\end{proposition}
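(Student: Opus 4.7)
The plan is to prove both identities by direct double counting, with no recurrence or generating function manipulation needed.

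For \eqref{bbnkjsum1}, I would observe that $\mathcal{B}_{n,k}=\bigsqcup_{j=0}^{n}\mathcal{B}_{n,k,j}$, since every $\sigma\in\mathcal{B}_{n,k}$ has a well-defined value $\mathrm{neg}(\sigma)\in\{0,1,\ldots,n\}$ and by definition of $\mathcal{B}_{n,k,j}$ this partitions $\mathcal{B}_{n,k}$ according to that value. Taking cardinalities gives the identity immediately.

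For \eqref{bbnkjsum2}, I would interpret the left side as the total number of $\sigma\in\mathcal{B}_{n}$ with $\mathrm{neg}(\sigma)=j$, obtained by summing over all possible descent counts $k$. I would then count such $\sigma$ directly by decoupling the sign data from the order data: a permutation $\sigma=(0,\sigma_1,\ldots,\sigma_n)\in\mathcal{B}_n$ is uniquely determined by the sequence of absolute values $(|\sigma_1|,\ldots,|\sigma_n|)$, which is an arbitrary element of $\mathcal{A}_n$, together with the subset $U\subseteq\{1,\ldots,n\}$ of absolute values to which a minus sign is attached. Requiring $\mathrm{neg}(\sigma)=j$ is equivalent to requiring $|U|=j$, so there are $\binom{n}{j}$ choices for $U$ and $n!$ choices for the underlying type $A$ permutation, giving $\binom{n}{j}n!$ as claimed.

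There is no real obstacle here; both formulas follow at once from the definitions. The only thing worth highlighting in the write-up is the independence of sign choice and order choice used in the second argument, since this is precisely the bijection $\mathcal{B}_n\longleftrightarrow \mathcal{A}_n\times 2^{\{1,\ldots,n\}}$ that underlies the identity $|\mathcal{B}_n|=2^n n!$ stated earlier in the preliminaries.
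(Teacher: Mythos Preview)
Your proposal is correct and matches the paper's own proof, which is essentially a two-line remark that the first sum counts all $\sigma\in\mathcal{B}_n$ with $k$ descents and the second counts all $\sigma\in\mathcal{B}_n$ with $j$ minus signs. You have simply spelled out the bijection $\mathcal{B}_n\leftrightarrow\mathcal{A}_n\times 2^{\{1,\ldots,n\}}$ behind the count $\binom{n}{j}n!$ more explicitly than the paper does.
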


\begin{proof}
The former sum counts all $\sigma\in\mathcal{B}_{n}$ which have $k$ descents,
while the latter counts all $\sigma\in\mathcal{B}_{n}$ which have
$j$ minus signs in the sequence $(\sigma_1,\ldots,\sigma_n)$.
\end{proof}

From Corollary~4.4 in \cite{borowiec2016} we have also
\begin{align}
\sum_{\substack{j=0\\ j\,\,\mathrm{even}}}^{n}B(n,k,j)
&=\frac{1}{2}B(n,k)
+\frac{(-1)^{k}}{2}\binom{n}{k},\\
\sum_{\substack{j=0\\ j\,\,\mathrm{odd}}}^{n}B(n,k,j)
&=\frac{1}{2}B(n,k)
-\frac{(-1)^{k}}{2}\binom{n}{k},
\end{align}
see A262226 and A262227 in OEIS.

Now we present the basic recurrence relations for the numbers $B(n,k,j)$.

\begin{theorem}\label{theorembrecurrence}
The numbers $B(n,k,j)$ admit the following recurrence:
\begin{equation}\label{bbnkjrecurrence}
\begin{gathered}
B(n,k,j)=(k+1)B(n-1,k,j)+(n-k)B(n-1,k-1,j)\\
+k B(n-1,k,j-1)+(n-k+1)B(n-1,k-1,j-1)
\end{gathered}
\end{equation}
for $0< k,j< n$, with boundary conditions:
\begin{align}
B(n,0,j)&=[j=0],&B(n,n,j)&=[j=n],\label{bbboundary1}\\
B(n,k,0)&=A(n,k),&
B(n,k,n)&=A(n,n-k)\label{bbboundary2}
\end{align}
for $0\le k,j\le n$.
\end{theorem}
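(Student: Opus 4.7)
The plan is a standard insertion argument, tracking both statistics (descents and negative entries) simultaneously as we build $\sigma\in\mathcal{B}_n$ from $\sigma'\in\mathcal{B}_{n-1}$ by inserting either $+n$ or $-n$ into one of the $n$ available slots (right after position $0,1,\ldots,n-1$ in the string $(0,\sigma'_1,\ldots,\sigma'_{n-1})$). This gives $2n$ constructions per $\sigma'$, accounting for $|\mathcal{B}_n|=2n\cdot|\mathcal{B}_{n-1}|$, so the map is a bijection between $\mathcal{B}_n$ and the set of pairs (slot, sign) applied to elements of $\mathcal{B}_{n-1}$.

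I would analyse the four cases separately. If I insert $+n$ between $\sigma'_i$ and $\sigma'_{i+1}$ (with $i<n-1$), a descent there is destroyed but a new descent $n>\sigma'_{i+1}$ is created, while no descent $\sigma'_i>n$ appears; the net change in $\mathrm{des}$ is $1-[\sigma'_i>\sigma'_{i+1}]$. Inserting $+n$ at the very end produces no change. So starting from $\sigma'$ with $k'$ descents, the descent count remains $k'$ in exactly $k'+1$ slots and becomes $k'+1$ in the other $n-1-k'$ slots; the number of negative entries is unchanged. Inserting $-n$ is dual: now the pair $\sigma'_i>-n$ is always a descent, while $-n>\sigma'_{i+1}$ never is, and the end slot always creates a descent. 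Hence $\mathrm{des}$ stays $k'$ in $k'$ slots and becomes $k'+1$ in $n-k'$ slots; the number of negative entries goes up by~$1$.

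Summing the four contributions—$+n$ into a permutation with $(k',j')=(k,j)$ keeping $k$ descents, $+n$ into $(k-1,j)$ creating an extra descent, $-n$ into $(k,j-1)$ keeping $k$ descents, and $-n$ into $(k-1,j-1)$ creating an extra descent—yields exactly
\[
(k+1)B(n-1,k,j)+(n-k)B(n-1,k-1,j)+kB(n-1,k,j-1)+(n-k+1)B(n-1,k-1,j-1),
\]
which is \eqref{bbnkjrecurrence}. The only point requiring care is matching slot counts to coefficients, in particular remembering that the trailing slot behaves like a non-descent for $+n$ but like a non-descent-with-a-forced-new-descent for $-n$.

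For the boundary conditions I would argue directly. A type~$B$ permutation with $0$ descents forces $(0,\sigma_1,\ldots,\sigma_n)$ to be increasing, hence $\sigma_i=i$ and $j=0$, giving \eqref{bbboundary1}; the case $\mathrm{des}(\sigma)=n$ is dual. If $j=0$ then $\sigma$ is an ordinary permutation of $\{1,\ldots,n\}$ and the leading $0$ contributes no descent, so descents in $(0,\sigma_1,\ldots,\sigma_n)$ match those in $(\sigma_1,\ldots,\sigma_n)$, yielding $B(n,k,0)=A(n,k)$. If $j=n$, write $\sigma_i=-\tau_i$ with $\tau\in\mathcal{A}_n$; then $(0,-\tau_1,\ldots,-\tau_n)$ has a forced descent at the start and its remaining descents correspond to ascents of $\tau$, so $\mathrm{des}(\sigma)=1+(n-1-\mathrm{des}(\tau))=n-\mathrm{des}(\tau)$, giving $B(n,k,n)=A(n,n-k)$ as in \eqref{bbboundary2}. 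No step looks difficult; the only risk is a miscounted coefficient in the four-fold case analysis.
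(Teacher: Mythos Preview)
Your proposal is correct and follows essentially the same insertion/deletion argument as the paper: the paper defines the deletion map $\Lambda$ removing $\pm n$ and counts preimages, while you run the inverse insertion and count slots, arriving at the same four-term case analysis with identical coefficients. Your direct treatment of the boundary cases $j=n$ and $k=n$ is slightly more explicit than the paper's, which simply invokes the symmetry $B(n,k,j)=B(n,n-k,n-j)$ established earlier.
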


Equality (\ref{bbnkjrecurrence}) remains true for $0\le j,k\le n$
under convention that $B(n,k,j)=0$ whenever $j\in\{-1,n+1\}$ or $k\in\{-1,n+1\}$.

\begin{proof}
For $(\sigma_0,\ldots,\sigma_n)\in\mathcal{B}_{n}$, $n\ge1$, we define
\[
\Lambda\sigma:=(\sigma_0,\ldots,\widehat{\sigma}_i,\ldots,\sigma_{n})\in\mathcal{B}_{n-1},
\]
where $i$ is such that $\sigma_i=\pm n$, and the symbol ``$\widehat{\sigma}_i$" means, that
the element ${\sigma}_i$ has been removed from the sequence.

For given $\sigma\in\mathcal{B}_{n,k,j}$, $0<k,j<n$, we have four possibilities:
\begin{itemize}
\item
$\sigma_i=n$ and either $i=n$ or $\sigma_{i-1}>\sigma_{i+1}$, $1\le i<n$.
Then $\Lambda\sigma\in\mathcal{B}_{n-1,k,j}$.

\item $\sigma_i=n$ and $\sigma_{i-1}<\sigma_{i+1}$, $1\le i<n$.
Then $\Lambda\sigma\in\mathcal{B}_{n-1,k-1,j}$.

\item $\sigma_i=-n$ and $\sigma_{i-1}>\sigma_{i+1}$, $1\le i<n$.
Then $\Lambda\sigma\in\mathcal{B}_{n-1,k,j-1}$.

\item $\sigma_i=-n$ and either $i=n$ or $\sigma_{i-1}<\sigma_{i+1}$, $1\le i<n$.
Then $\Lambda\sigma\in\mathcal{B}_{n-1,k-1,j-1}$.
\end{itemize}

Now, suppose we are given a fixed $\tau=(\tau_0,\ldots,\tau_{n-1})$
which belongs to one of the sets $\mathcal{B}_{n-1,k,j}$, $\mathcal{B}_{n-1,k-1,j}$,
$\mathcal{B}_{n-1,k,j-1}$ or $\mathcal{B}_{n-1,k-1,j-1}$.
We are going to count all $\sigma\in\mathcal{B}_{n,k,j}$ such that $\Lambda\sigma=\tau$.

If $\tau\in\mathcal{B}_{n-1,k,j}$ then
we should either put $n$ at the end of $\tau$, or insert into a descent of $\tau$,
i.e. between $\tau_{i-1}$ and $\tau_{i}$, where $1\le i\le n-1$, $\tau_{i-1}>\tau_{i}$,
therefore we have $k+1$ possibilities.

Similarly, if $\tau\in\mathcal{B}_{n-1,k-1,j}$
then we construct $\sigma$ by inserting $n$ between $\tau_{i-1}$ and $\tau_{i}$, $1\le i\le n-1$,
where $\tau_{i-1}<\tau_{i}$. For this we have $n-k$ possibilities.

Now assume that $\tau\in\mathcal{B}_{n-1,k,j-1}$.
Then we should insert $-n$ between
$\tau_{i-1}$ and $\tau_{i}$, $1\le i\le n-1$, where $\tau_{i-1}>\tau_{i}$,
for which we have $k$ possibilities.

Finally, if  $\tau\in\mathcal{B}_{n-1,k-1,j-1}$
then we put $-n$ either at the end of $\tau$ or between $\tau_{i-1}$
and $\tau_{i}$, $1\le i\le n-1$, where $\tau_{i-1}<\tau_{i}$, for which we have $n-k+1$ possibilities.

Therefore the number of $\sigma\in\mathcal{B}_{n,k,j}$
such that $\Lambda\sigma$ belongs to the set $\mathcal{B}_{n-1,k,j}$,
$\mathcal{B}_{n-1,k-1,j}$, $\mathcal{B}_{n-1,k,j-1}$ or
$\mathcal{B}_{n-1,k-1,j-1}$
is equal to $(k+1)B(n-1, k,j)$, $(n-k)B(n-1, k-1,j)$,
$kB(n-1, k,j-1)$ or $(n-k+1)B(n-1, k-1,j-1)$
respectively. This proves (\ref{bbnkjrecurrence}).

For the boundary conditions it is clear that if $\mathrm{neg}(\sigma)>0$
then $\mathrm{des}(\sigma)>0$, which yields $B(n,0,j)=[j=0]$. We note that the map $(\sigma_0,\sigma_1,\ldots,\sigma_n)\mapsto(\sigma_1,\ldots,\sigma_n)$
is a bijection of $\mathcal{B}_{n,k,0}$ onto $\mathcal{A}_{n,k}$,
consequently $B(n,k,0)=A(n,k)$. For the two others we refer to (\ref{bbnkjsymmetry}).
\end{proof}

Below we present tables for the numbers $B(n,k,j)$ for $n=0,1,2,3,4,5$:
\[
\begin{array}{c|c}
  	k\setminus j & 0  \\\hline
	0   &1
  	  	\end{array},\quad
\begin{array}{c|cc}
  	k\setminus j & 0 &1 \\\hline
	0   &1 & 0  \\
  	1	&0 & 1
  	\end{array},\quad
\begin{array}{c|ccc}
  	k\setminus j & 0 &1 &2 \\\hline
	0   &1 & 0 &0 \\
  	1	&1 & 4 &1 \\
  	2	&0 & 0 &1
  	\end{array},\quad
 	\begin{array}{c|cccc}
 k\setminus j & 0 &1 &2 &3\\\hline
 	0	&1 &0 &0 &0\\
 	1	&4 &12 &6 &1\\
 	2	&1 &6 &12 &4 \\
	3	&0 &0 &0 &1
\end{array},
\]
\[
\begin{array}{c|ccccc}
  		k\setminus j & 0 &1 &2 &3 &4\\\hline	
  		0	&1  &0  &0  &0  &0\\
  		1	&11 &32 &24 &8  &1\\
  		2	&11 &56 &96 &56 &11\\
  		3	&1  &8  &24 &32 &11\\
  		4	&0  &0  &0  &0  &1
  \end{array},\qquad
  \begin{array}{c|cccccc}
  	k\setminus j & 0 &1 &2 &3 &4 &5\\\hline
  	0	&1  &0   &0   &0   &0   &0   \\
  	1	&26 &80  &80  &40  &10  &1   \\
  	2	&66 &330 &600 &480 &180 &26  \\
  	3	&26 &180 &480 &600 &330 &66  \\
  	4	&1  &10  &40  &80  &80  &26  \\
  	5	&0  &0   &0   &0   &0   &1
  	\end{array}.
\]

For example we have $B(n,1,0)=2^{n}-n-1$ and $B(n,1,j)=\binom{n}{j}2^{n-j}$ for $1\le j\le n$
(cf. A038207 in OEIS).
We will see that $B(n,k,j)/\binom{n}{j}$ is always an integer.

\section{Generating functions}\label{sectiongeneratingfunctions}

Now we define three families of polynomials corresponding to the numbers $B(n,k,j)$:
\begin{align}
P_{n,j}(x)&:=\sum_{k=0}^{n} B(n,k,j)x^k,\\
Q_{n,k}(y)&:=\sum_{j=0}^{n} B(n,k,j)y^j,\\
R_{n}(x,y)&:=\sum_{j,k=0}^{n} B(n,k,j)x^k y^j.
\end{align}
The polynomials $R_{n}(x,y)$ were studied by Brenti~\cite{brenti},
who called them ``$q$-Eulerian polynomials of type $B$".

The symmetry (\ref{bbnkjsymmetry}) implies:
\begin{align}
P_{n,j}(x)&=x^n P_{n,n-j}(1/x),\\
Q_{n,k}(y)&=y^n Q_{n,n-k}(1/y),\label{bbqsymmetry}\\
R_{n}(x,y)&=x^n y^n R_{n}(1/x,1/y).
\end{align}

\begin{proposition}\label{propositionprecurrence}
The polynomials $P_{n,j}(x)$ satisfy the following recurrence:
\begin{align}
P_{n,j}(x) &= (1+nx-x)P_{n-1,j}(x) + (x-x^2)P_{n-1,j}'(x)\label{bbpnjrecurrence}\\
&+ nxP_{n-1,j-1}(x)+(x-x^2)P_{n-1,j-1}'(x),\nonumber
\end{align}
with the initial conditions: $P_{n,0}(x)=P^{\mathrm{A}}_{n}(x)$ for $n\ge0$ and
$P_{n,n}(x)=x P^{\mathrm{A}}_{n}(x)$ for $n\ge1$.
\end{proposition}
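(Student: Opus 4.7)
The plan is to derive the polynomial recurrence from the scalar recurrence (\ref{bbnkjrecurrence}) by multiplying through by $x^k$ and summing over $k$. The four terms on the right of (\ref{bbnkjrecurrence}) have coefficients that are linear in $k$, so each of the four sums will collapse into a combination of $P_{n-1,j}(x)$, $P_{n-1,j-1}(x)$, and their $x$-derivatives.

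Concretely, if we write $f_j(x):=P_{n-1,j}(x)$, the identities I would compute are
\begin{align*}
\sum_{k}(k+1)B(n-1,k,j)x^{k}&=\bigl(xf_j(x)\bigr)'=f_j(x)+xf_j'(x),\\
\sum_{k}(n-k)B(n-1,k-1,j)x^{k}&=(n-1)xf_j(x)-x^{2}f_j'(x),\\
\sum_{k}k\,B(n-1,k,j-1)x^{k}&=xf_{j-1}'(x),\\
\sum_{k}(n-k+1)B(n-1,k-1,j-1)x^{k}&=nx f_{j-1}(x)-x^{2}f_{j-1}'(x),
\end{align*}
where the second and fourth identities use the index shift $k\mapsto k+1$ and the fact that $\sum_m (n-1-m)B(n-1,m,j)x^m = (n-1)P_{n-1,j}(x)-xP_{n-1,j}'(x)$. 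Summing the four contributions collects the $P_{n-1,j}(x)$ terms into $(1+(n-1)x)P_{n-1,j}(x)=(1+nx-x)P_{n-1,j}(x)$ and each of the derivative sums into $(x-x^{2})P_{n-1,j}'(x)$ and $(x-x^{2})P_{n-1,j-1}'(x)$, which matches (\ref{bbpnjrecurrence}) exactly.

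For the boundary conditions I would simply translate (\ref{bbboundary2}). From $B(n,k,0)=A(n,k)$ one gets $P_{n,0}(x)=P^{\mathrm{A}}_{n}(x)$ at once. For $P_{n,n}(x)$, the identity $B(n,k,n)=A(n,n-k)$ gives $P_{n,n}(x)=\sum_{k=0}^{n}A(n,n-k)x^{k}=x^{n}P^{\mathrm{A}}_{n}(1/x)$, and since $A(n,n)=0$ for $n\ge 1$ the polynomial $P^{\mathrm{A}}_{n}$ has degree $n-1$ with the classical palindrome property $A(n,k)=A(n,n-1-k)$, so $x^{n-1}P^{\mathrm{A}}_{n}(1/x)=P^{\mathrm{A}}_{n}(x)$; multiplying by $x$ yields $P_{n,n}(x)=xP^{\mathrm{A}}_{n}(x)$.

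The only step requiring any care is the bookkeeping of the differential operators: keeping track of whether a factor of $x$ or $-x^{2}$ attaches to $f_j'(x)$ after the index shift in the second sum, where one must rewrite $n-k$ as $(n-1-m)+(1-1+\cdots)$ carefully; everything else is purely routine. No obstacle beyond this arithmetic is expected.
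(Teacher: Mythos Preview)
Your proposal is correct and follows essentially the same route as the paper: both proofs multiply (\ref{bbnkjrecurrence}) by $x^k$, sum over $k$, and evaluate the four resulting sums via $\sum_k k\,B(n-1,k,\cdot)x^k = xP_{n-1,\cdot}'(x)$ together with an index shift. Your treatment of the initial condition $P_{n,n}(x)=xP^{\mathrm{A}}_{n}(x)$, via the palindrome identity $A(n,k)=A(n,n-1-k)$, is in fact more explicit than the paper, which states the boundary conditions without further comment.
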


\begin{proof}
It is easy to verify that
\[
\sum_{k=0}^{n} (k+1) B(n-1,k,j)x^{k}= P_{n-1,j}(x) + xP_{n-1,j}'(x),
\]
\[
\sum_{k=0}^{n} (n-k)B(n-1,k-1,j)x^{k}=nxP_{n-1,j}(x) - xP_{n-1,j}(x) - x^2P_{n-1,j}'(x),
\]
\[
\sum_{k=0}^{n} kB(n-1,k,j-1)x^{k}= xP_{n-1,j-1}'(x),
\]
and
\[
\sum_{k=0}^{n} (n-k+1)B(n-1,k-1,j-1)x^{k}
= nxP_{n-1,j-1}(x) - x^2P_{n-1,j-1}' (x).
\]
Summing up and applying (\ref{bbnkjrecurrence}) we obtain (\ref{bbpnjrecurrence}).
\end{proof}

Br\"{a}nd\'{e}n \cite{branden}, Corollary~6.9, proved that
for every nonempty subset $S\subseteq\{1,\ldots,n\}$
the polynomial $\sum_{j\in S}P_{n,j}(x)$ has only real and simple roots.
Combining (\ref{cpitalplittlep}) with Example~7.8.8 in \cite{branden2}
we will note (Theorem~\ref{theorempnjrealroots}) that in fact every linear combination
$c_{0} P_{n,0}(x)+c_{1} P_{n,1}(x)+\ldots+c_{n} P_{n,n}(x)$, with $c_0,c_1,\ldots,c_n\ge0$, has only real roots.
The cases when $S$ is the set of even or odd numbers in $\{1,\ldots,n\}$ were studied in \cite{borowiec2016}.
The Newton's inequality implies that if $0\le j\le n$ then the sequence $\{B(n,k,j)\}_{k=0}^{n}$
satisfies a stronger version of log-concavity, namely
\begin{equation}\label{bbnkjlogconcavej}
B(n,k,j)^2\ge B(n,k-1,j)B(n,k+1,j)\frac{(k+1)(n-k+1)}{k(n-k)}
\end{equation}
for $0<k<n$, in particular this sequence is unimodal.

For the polynomials $Q_{n,k}(y)$ we have the following, see (18) in \cite{brenti}:

\begin{proposition}
The polynomials $Q_{n,k}(y)$ satisfy the following recurrence:
\[
Q_{n,k}(y)=(k+1+ky)Q_{n-1,k}(y)+(n-k+(n-k+1)y)Q_{n-1,k-1}(y)
\]
with the initial conditions: $Q_{n,0}(y)=1$, $Q_{n,n}(y)=y^n$ for $n\ge0$.
\end{proposition}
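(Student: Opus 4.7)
The plan is to derive this recurrence by taking the defining recurrence for $B(n,k,j)$ established in Theorem~\ref{theorembrecurrence}, multiplying through by $y^j$, and summing over $j$. Since no derivative with respect to $y$ is involved, the calculation is considerably simpler than the one carried out for $P_{n,j}(x)$ in Proposition~\ref{propositionprecurrence}.

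Concretely, I would start from
\[
B(n,k,j)=(k+1)B(n-1,k,j)+(n-k)B(n-1,k-1,j)+kB(n-1,k,j-1)+(n-k+1)B(n-1,k-1,j-1),
\]
valid for $0\le k,j\le n$ under the convention that $B(n,k,j)=0$ when any index is out of range. Multiplying by $y^{j}$ and summing over $j\in\{0,\ldots,n\}$, the first two terms contribute $(k+1)Q_{n-1,k}(y)$ and $(n-k)Q_{n-1,k-1}(y)$, while the index shift $j\mapsto j-1$ in the last two terms produces
\[
\sum_{j}B(n-1,k,j-1)y^{j}=yQ_{n-1,k}(y),\qquad \sum_{j}B(n-1,k-1,j-1)y^{j}=yQ_{n-1,k-1}(y).
\]
Collecting the coefficients of $Q_{n-1,k}(y)$ and $Q_{n-1,k-1}(y)$ gives exactly $(k+1+ky)$ and $(n-k+(n-k+1)y)$, respectively, which is the claimed recurrence.

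For the initial conditions I would invoke the boundary values (\ref{bbboundary1}) from Theorem~\ref{theorembrecurrence}: $B(n,0,j)=[j=0]$ forces $Q_{n,0}(y)=1$, and $B(n,n,j)=[j=n]$ forces $Q_{n,n}(y)=y^{n}$. There is essentially no obstacle to overcome: the only thing requiring a moment of care is the boundary behaviour of the recurrence at $k=0$ and $k=n$, which is absorbed into the convention that $B(n-1,-1,\cdot)=B(n-1,n,\cdot)=0$, so the derivation is uniform across the full range $0\le k\le n$ and no separate verification of the edge cases is necessary.
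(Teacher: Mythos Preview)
Your proof is correct. The paper itself does not supply a proof of this proposition---it simply refers to (18) in \cite{brenti}---but your derivation by multiplying the recurrence (\ref{bbnkjrecurrence}) by $y^j$ and summing over $j$ is the natural one, and it mirrors exactly the method the paper uses to establish the companion recurrence for $P_{n,j}(x)$ in Proposition~\ref{propositionprecurrence}.
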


The polynomials $Q_{n,k}$ however do not have all roots real.
They satisfy the following versions of Worpitzky identity:
\begin{align}
\sum_{k=0}^{n}\binom{u+n-k}{n}Q_{n,k}(y)&=(u+1+uy)^n,\\
\sum_{k=0}^{n}\binom{u+k}{n}Q_{n,k}(y)&=(u+y+uy)^n.
\end{align}
The former is proved in \cite{brenti}, Theorem~3.4,
the latter follows from the former and the symmetry~(\ref{bbqsymmetry}).

Now we recall the recurrence relation for $R_{n}(x,y)$ (see Theorem~3.4 in \cite{brenti}):

\begin{proposition}
The polynomials $R_{n}(x,y)$ admit the following recurrence:
\[
R_{n}(x,y)=(1+nxy+nx-x)R_{n-1}(x,y)+(x-x^2)(1+y)\frac{\partial}{\partial x}R_{n-1}(x,y),
\]
$n\ge1$, with initial condition  $R_{0}(x,y)=1$.
\end{proposition}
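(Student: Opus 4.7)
The plan is to multiply the recurrence (\ref{bbnkjrecurrence}) by $x^k y^j$ and sum over $0\le k,j\le n$, adopting the convention that $B(n,k,j)=0$ whenever $k$ or $j$ lies outside $\{0,\ldots,n\}$; as noted just after Theorem~\ref{theorembrecurrence}, this convention makes the recurrence valid for every $0\le k,j\le n$ and so absorbs the boundary values (\ref{bbboundary1})--(\ref{bbboundary2}). The left-hand side is then $R_n(x,y)$ by definition.

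For the right-hand side the four terms can be treated separately, essentially by re-running the bookkeeping that appears in the proof of Proposition~\ref{propositionprecurrence} with the extra variable $y$ carried along. Writing $f=R_{n-1}(x,y)$ and doing the index shifts $k\mapsto k-1$ and/or $j\mapsto j-1$, I expect to obtain
\begin{align*}
\sum_{k,j}(k+1)B(n-1,k,j)\,x^ky^j &= f+x\,\tfrac{\partial f}{\partial x},\\
\sum_{k,j}(n-k)B(n-1,k-1,j)\,x^ky^j &= (n-1)xf-x^2\,\tfrac{\partial f}{\partial x},\\
\sum_{k,j}k\,B(n-1,k,j-1)\,x^ky^j &= xy\,\tfrac{\partial f}{\partial x},\\
\sum_{k,j}(n-k+1)B(n-1,k-1,j-1)\,x^ky^j &= nxy\,f-x^2y\,\tfrac{\partial f}{\partial x}.
\end{align*}
Each of these is a one-line computation: the first three are exactly the identities already verified in Proposition~\ref{propositionprecurrence} (just summed also in $j$), and the fourth follows from the same index shift as the second, with an extra factor of $y$ coming from $j\mapsto j-1$.

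Adding the four expressions and collecting by $f$ and $\partial f/\partial x$ gives
\[
R_n(x,y)=\bigl(1+(n-1)x+nxy\bigr)f+\bigl(x-x^2+xy-x^2y\bigr)\tfrac{\partial f}{\partial x},
\]
and factoring $(x-x^2+xy-x^2y)=(x-x^2)(1+y)$ together with $(n-1)x=nx-x$ yields the claimed recurrence. The initial condition $R_0(x,y)=1$ is immediate from $B(0,0,0)=1$. The only real point to watch is the boundary bookkeeping at $k\in\{0,n\}$ and $j\in\{0,n\}$, where one must check that the zero-extension convention really does reproduce (\ref{bbboundary1})--(\ref{bbboundary2}); this is the main, though easy, obstacle, and is settled by the same case analysis already used in the proof of Theorem~\ref{theorembrecurrence}.
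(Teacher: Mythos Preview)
Your argument is correct. The four identities are exactly right, the algebra adds up, and the boundary bookkeeping is handled by the remark after Theorem~\ref{theorembrecurrence} just as you say. Note that the paper itself does not supply a proof of this proposition; it merely cites Theorem~3.4 in \cite{brenti}. Your derivation is the natural two-variable extension of the computation in Proposition~\ref{propositionprecurrence}, and is precisely what one would write if one wanted a self-contained proof here.
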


Brenti \cite{brenti} also found the generating function for the numbers $B(n,k,j)$:
\begin{equation}
f(x,y,z):=\sum_{n=0}^{\infty}\frac{R_{n}(x,y)}{n!}z^n
=\frac{(1-x) e^{(1-x)z}}{1-x e^{(1-x)(1+y)z}}.
\end{equation}
Note that
\begin{equation}
f(x,y,z)=f^{\mathrm{A}}\big(x,(1+y)z\big)e^{(x-1)yz}.
\end{equation}

\section{Refined numbers}\label{sectionrefined}

For $0\le k\le n$ and a subset $U\subseteq\{1,2,\ldots,n\}$
we define $\mathcal{B}_{n,k,U}$
as the set of those $\sigma\in\mathcal{B}_{n,k}$ which
have minus sign at $\sigma_i$, $1\le i\le n$, if and only if $|\sigma_i|\in U$.
Therefore we have
\begin{equation}\label{bbbnkusum}
\dot{\bigcup_{\substack{U\subseteq\{1,\ldots,n\}\\|U|=j}}}\mathcal{B}_{n,k,U}=\mathcal{B}_{n,k,j}.
\end{equation}
The cardinality of $\mathcal{B}_{n,k,U}$ will be denoted $b(n,k,U)$.
By convention we put $b(n,-1,U)=b(n,n+1,U):=0$.
It is quite easy to observe boundary conditions.

\begin{proposition}\label{bbbpropositionboundary}
For $n\ge1$, $0\le k\le n$, $U\subseteq\{1,\ldots,n\}$ we have
\begin{align*}
b(n,0,U)&=[U=\emptyset],&b(n,n,U)&=[U=\{1,\ldots,n\}],\\
b(n,k,\emptyset)&=A(n,k),&b(n,k,\{1,\ldots,n\})&=A(n,n-k).
\end{align*}
\end{proposition}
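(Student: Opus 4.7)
The plan is to prove each of the four boundary identities by a direct bijective or unique-element argument, treating the two ``extreme $k$'' cases and the two ``extreme $U$'' cases in parallel pairs.

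First I would dispatch the cases $k=0$ and $k=n$. A sequence $(0,\sigma_1,\ldots,\sigma_n)\in\mathcal{B}_n$ with $\mathrm{des}(\sigma)=0$ is strictly increasing, so in particular $0<\sigma_1<\sigma_2<\cdots<\sigma_n$, forcing $\sigma_i>0$ for every $i$ and hence $U=\emptyset$; the unique such $\sigma$ is $(0,1,2,\ldots,n)$, yielding $b(n,0,U)=[U=\emptyset]$. Dually, $\mathrm{des}(\sigma)=n$ forces $(0,\sigma_1,\ldots,\sigma_n)$ to be strictly decreasing, so every $\sigma_i<0$, which means $U=\{1,\ldots,n\}$, and the unique such sequence is $(0,-1,-2,\ldots,-n)$.

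Next I would handle $U=\emptyset$. Here every $\sigma_i$ is positive, so $(\sigma_1,\ldots,\sigma_n)\in\mathcal{A}_n$; moreover, since $\sigma_0=0<\sigma_1$, the first position never contributes a descent. Therefore stripping the leading $0$ gives a bijection $\mathcal{B}_{n,k,\emptyset}\to\mathcal{A}_{n,k}$ that preserves the descent count, and $b(n,k,\emptyset)=A(n,k)$.

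Finally, for $U=\{1,\ldots,n\}$ I would invoke the sign-flipping involution used in the proof of (\ref{bbnkjsymmetry}), namely $\sigma\mapsto-\sigma$ (i.e.\ $(0,\sigma_1,\ldots,\sigma_n)\mapsto(0,-\sigma_1,\ldots,-\sigma_n)$). One checks that this is a bijection $\mathcal{B}_{n,k,U}\to\mathcal{B}_{n,n-k,\{1,\ldots,n\}\setminus U}$: the set of positions with minus signs is complemented, and every one of the $n$ potential descent positions (including the first, involving $\sigma_0=0$) flips between descent and ascent, giving $\mathrm{des}(-\sigma)=n-\mathrm{des}(\sigma)$. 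Applied to $U=\{1,\ldots,n\}$ this yields $b(n,k,\{1,\ldots,n\})=b(n,n-k,\emptyset)=A(n,n-k)$ by the previous case. There is really no hard step here; the only mild care needed is to verify that position $1$ (the one comparison involving the fixed entry $0$) contributes correctly to the descent count under the flip, which is immediate because $0<-\sigma_1\iff\sigma_1<0$.
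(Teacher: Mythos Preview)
Your proof is correct and matches the paper's intended approach. The paper does not actually write out a proof of Proposition~\ref{bbbpropositionboundary} (it merely remarks that ``it is quite easy to observe boundary conditions''), but your arguments are exactly the ones used in the proof of the analogous boundary conditions~(\ref{bbboundary1})--(\ref{bbboundary2}) for the numbers $B(n,k,j)$ in Theorem~\ref{theorembrecurrence}: the observation that $\mathrm{des}(\sigma)=0$ forces all entries positive, the ``strip the leading $0$'' bijection $\mathcal{B}_{n,k,\emptyset}\to\mathcal{A}_{n,k}$, and the sign-flip symmetry~(\ref{bbnkjsymmetry}) to obtain the remaining two identities.
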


Now we provide a recurrence relation.

\begin{proposition}\label{littlebpropositionrecurrence}
For $0\le k\le n$, $U\subseteq\{1,2,\ldots,n\}$ we have
\begin{equation}\label{bbbnkurecurrence1}
b(n,k,U)=(k+1)\cdot b(n-1,k,U)+(n-k)\cdot b(n-1,k-1,U)
\end{equation}
if $n\notin U$ and
\begin{equation}\label{bbbnkurecurrence2}
b(n,k,U)=k\cdot b(n-1,k,U')+(n-k+1)\cdot b(n-1,k-1,U')
\end{equation}
if $n\in U$, where $U':=U\setminus\{n\}$.
\end{proposition}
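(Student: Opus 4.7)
My plan is to adapt the argument of Theorem~\ref{theorembrecurrence} by using the same deletion map $\Lambda$, now refined to track the full set $U$ and not just its cardinality. For $\sigma\in\mathcal{B}_{n,k,U}$ I would let $\Lambda\sigma$ denote the sequence obtained by removing the unique entry of absolute value $n$. This entry equals $+n$ exactly when $n\notin U$ and $-n$ exactly when $n\in U$; since deletion does not alter any other $|\sigma_i|$ or its sign, $\Lambda$ will carry $\mathcal{B}_{n,k,U}$ into $\mathcal{B}_{n-1,\cdot,U}$ in the first case and into $\mathcal{B}_{n-1,\cdot,U'}$ in the second, with $U'=U\setminus\{n\}$.

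In the case $n\notin U$, I would repeat the descent analysis from Theorem~\ref{theorembrecurrence} verbatim: the descent count of $\Lambda\sigma$ is $k$ when $+n$ sits at the end or between two entries that form a descent, and $k-1$ when $+n$ sits between two entries that form an ascent. Inverting the map, each $\tau\in\mathcal{B}_{n-1,k,U}$ yields $k+1$ preimages in $\mathcal{B}_{n,k,U}$ (insertion of $+n$ at the end or into one of the $k$ descent slots of $\tau$), and each $\tau\in\mathcal{B}_{n-1,k-1,U}$ yields $n-k$ preimages (insertion into one of the $n-k$ ascent slots). Summing these contributions gives (\ref{bbbnkurecurrence1}).

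When $n\in U$, the same case analysis with $-n$ in place of $+n$ will show that $\Lambda\sigma\in\mathcal{B}_{n-1,k,U'}$ precisely when $-n$ lies between entries forming a descent, and $\Lambda\sigma\in\mathcal{B}_{n-1,k-1,U'}$ precisely when $-n$ lies at the end or between entries forming an ascent. Since $-n$ is smaller than every other entry of the resulting sequence, reinserting it into a descent slot of $\tau\in\mathcal{B}_{n-1,k,U'}$ preserves the descent count and contributes $k$ preimages, while reinserting it at the end or into an ascent slot of $\tau\in\mathcal{B}_{n-1,k-1,U'}$ creates exactly one new descent and contributes $n-k+1$ preimages. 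This produces (\ref{bbbnkurecurrence2}).

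I expect no real obstacle beyond the analysis already carried out for Theorem~\ref{theorembrecurrence}; the only new step is the routine observation that $\Lambda$ and its inverses touch only the single entry of largest absolute value and therefore preserve the entire signed pattern $U$, not merely its size $|U|$. Everything else reduces to the same bookkeeping of descents versus ascents at the insertion site.
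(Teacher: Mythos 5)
Your proposal is correct and follows essentially the same route as the paper: the authors likewise reuse the deletion map $\Lambda$ from Theorem~\ref{theorembrecurrence}, observe that removing the entry of absolute value $n$ preserves the signed pattern $U$ (resp.\ sends it to $U'$), and count preimages by inserting $\pm n$ into descent or ascent slots exactly as you describe. No substantive difference.
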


\begin{proof}
Both formulas are true when $k=0$ or $k=n$.
Assume that $0<k<n$.
We will apply the same map $\Lambda:\mathcal{B}_{n}\to\mathcal{B}_{n-1}$ as in the proof of Theorem~2.1.
Fix $\sigma\in\mathcal{B}_{n,k,U}$ and assume that $i$ is such that $\sigma_i=n$ (when $n\notin U$)
or $\sigma_i=-n$ (when $n\in U$), $1\le i\le n$. We have now four possibilities:
\begin{itemize}
\item $n\notin U$ and either $i=n$ or $\sigma_{i-1}>\sigma_{i+1}$, $1\le i<n$.
Then $\Lambda\sigma\in\mathcal{B}_{n-1,k,U}$.

\item $n\notin U$ and $\sigma_{i-1}<\sigma_{i+1}$, $1\le i<n$.
Then $\Lambda\sigma\in\mathcal{B}_{n-1,k-1,U}$.

\item $n\in U$ and $\sigma_{i-1}>\sigma_{i+1}$, $1\le i<n$.
Then $\Lambda\sigma\in\mathcal{B}_{n-1,k,U\setminus\{n\}}$.

\item $n\in U$ and either $i=n$ or $\sigma_{i-1}<\sigma_{i+1}$, $1\le i<n$.
Then $\Lambda\sigma\in\mathcal{B}_{n-1,k-1,U\setminus\{n\}}$.
\end{itemize}

On the other hand, as in the proof of Theorem~\ref{theorembrecurrence}, we see that
for a given $\tau$ in $\mathcal{B}_{n-1,k,U}$ (resp. in $\mathcal{B}_{n-1,k-1,U}$)
there are $k+1$ (resp. $n-k$) such $\sigma$'s in $\mathcal{B}_{n,k,U}$ that $\Lambda\sigma=\tau$.
We simply insert $n$ into a descent or at the end of $\tau$
(resp. into an ascent).
Similarly, for a given $\tau$ in $\mathcal{B}_{n-1,k,V}$ (resp. in $\mathcal{B}_{n-1,k-1,V}$)
there are $k$ (resp. $n-k+1$) such $\sigma$'s in $\mathcal{B}_{n,k,V\cup\{n\}}$ that $\Lambda\sigma=\tau$.
\end{proof}

Now we will see that $b(n,k,U)$ depends only on $n,k$ and the cardinality of $U$.

\begin{theorem}\label{theorembnku}
If $0\le k\le n$, $U,V\subseteq\{1,\ldots,n\}$ and $|U|=|V|$ then
\[
b(n,k,U)=b(n,k,V).
\]
\end{theorem}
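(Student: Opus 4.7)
The plan is to reduce the theorem to the special case of ``adjacent'' swaps and then to exhibit an explicit descent-preserving bijection for each such swap. Concretely, for any $U\subseteq\{1,\ldots,n\}$ and any $m\in\{1,\ldots,n-1\}$ with $m\in U$ and $m+1\notin U$, I will construct a bijection
\[
\phi_m\colon\mathcal{B}_{n,k,U}\longrightarrow\mathcal{B}_{n,k,V},\qquad V:=(U\setminus\{m\})\cup\{m+1\}.
\]
Once $\phi_m$ is in hand, the general statement follows: any two $j$-subsets of $\{1,\ldots,n\}$ can be connected by a sequence of such moves (equivalently, any two binary strings of length $n$ and weight $j$ can be transformed into each other by repeatedly swapping a consecutive $01$ with $10$), so iterating $\phi_m$ along such a path yields $b(n,k,U)=b(n,k,V)$ for arbitrary $U,V$ with $|U|=|V|$.

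To define $\phi_m$, fix $\sigma=(0,\sigma_1,\ldots,\sigma_n)\in\mathcal{B}_{n,k,U}$. The hypothesis $m\in U$, $m+1\notin U$ guarantees that $-m$ occupies some position $p\in\{1,\ldots,n\}$ and $m+1$ occupies some position $q\in\{1,\ldots,n\}$, with $q\neq p$. Let $\phi_m(\sigma)$ be obtained from $\sigma$ by replacing $\sigma_p=-m$ by $-(m+1)$ and $\sigma_q=m+1$ by $m$, while keeping all other entries fixed. The multiset of absolute values is unchanged, so $\phi_m(\sigma)\in\mathcal{B}_n$; the positions with negative entries are exactly those of absolute value in $V$; and the analogous map from $\mathcal{B}_{n,k,V}$ to $\mathcal{B}_{n,k,U}$ (which is defined because $m+1\in V$, $m\notin V$) is a two-sided inverse of $\phi_m$, so $\phi_m$ is a bijection.

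The real content, and the main obstacle, is to check that $\phi_m$ preserves the number of descents. The key observation is that the value $-m$ appears only at position $p$, and the value $m$ does not appear in $(\sigma_1,\ldots,\sigma_n)$ at all, because its negative does. Hence for every index $r\neq p$ one has $\sigma_r>-m$ if and only if $\sigma_r>-(m+1)$, and for every index $r\neq q$ (including $r=0$, since $\sigma_0=0\notin\{m,m+1\}$) one has $\sigma_r>m+1$ if and only if $\sigma_r>m$. This immediately implies that every adjacent pair $(\sigma_{i-1},\sigma_i)$ containing at most one of the indices $p,q$ is a descent in $\sigma$ exactly when the corresponding pair in $\phi_m(\sigma)$ is. The only remaining subcases are $|p-q|=1$: if $q=p+1$ the pair $(-m,m+1)$ becomes $(-(m+1),m)$, and both are ascents; if $q=p-1$ the pair $(m+1,-m)$ becomes $(m,-(m+1))$, and both are descents. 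Either way $\mathrm{des}(\phi_m(\sigma))=\mathrm{des}(\sigma)=k$, which completes the proof.
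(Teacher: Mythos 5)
Your proof is correct. At bottom it runs on the same mechanism as the paper's: both arguments relabel values by post-composing $\sigma$ with a sign-preserving permutation carrying $U$ to $V$, and your $\phi_m$ is exactly $\sigma\mapsto\tau_m\circ\sigma$, where $\tau_m$ is the adjacent transposition of $m$ and $m+1$ extended oddly to $\mathcal{B}_n$. The difference is one of decomposition. The paper takes a single global $\tau$ (the unique permutation with $\tau(U)=V$ that is order-preserving on $U$ and on its complement) and verifies descent-preservation in one shot, using the fact that for $\sigma\in\mathcal{B}_{n,k,U}$ the sign of an entry already determines whether its absolute value lies in $U$ or in its complement. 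You instead verify only the generators --- swapping a negatively signed $m$ with a positively signed $m+1$ --- and then appeal to the connectivity of weight-$j$ binary strings under adjacent transpositions. Your route makes the descent check essentially trivial (only the values $m$, $m+1$ and their negatives could interfere, and you correctly observe that $m$ and $-(m+1)$ are absent from the word), at the cost of an extra, though standard, connectivity step and an iteration; the paper's route needs a slightly more careful sign-based case analysis but produces the bijection in a single step. Both yield complete proofs of the theorem.
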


\begin{proof}
Fix $U,V\subseteq\{1,\ldots,n\}$, with $|U|=|V|$ and define
$\tau\in\mathcal{A}_n$ as the unique permutation of $\{1,\ldots,n\}$
such that: $\tau(U)=V$, $\tau|_{U}$ preserves the order and
$\tau|_{\{1,\ldots,n\}\setminus U}$ preserves the order.
We extend $\tau$ to an element of $\mathcal{B}_{n}$
by putting $\tau(-i)=-\tau(i)$.
Now let $\sigma\in \mathcal{B}_{n,k,U}$.
Then, by definition, $\tau(\sigma(i))<0$ if and only if $\sigma(i)<0$, $-n\le i\le n$.
Moreover, if $1\le i\le n$ then $\tau(\sigma(i-1))<\tau(\sigma(i))$ if and only if $\sigma(i-1)<\sigma(i)$.
This is clear when $\sigma(i-1)$ and $\sigma(i)$ have different signs.
If they have the same sign then this is a consequence of the order preserving property of
$\tau$ on $U$ and on $\{1,\ldots,n\}\setminus U$.
Consequently, the map $\sigma\mapsto\tau\circ\sigma$ is a bijection of $\mathcal{B}_{n,k,U}$
onto $\mathcal{B}_{n,k,V}$.
\end{proof}

The theorem justifies the following definition: for $0\le j,k\le n$ we put
\[
b(n,k,j):=b(n,k,U),
\]
where $U$ is an arbitrary subset of $\{1,\ldots,n\}$ with $|U|=j$.
In addition, if $j<0$ or $k<0$ or $n<j$ or $n<k$ then we put $b(n,k,j)=0$.
From (\ref{bbbnkusum}) we obtain

\begin{corollary}
For $0\le j,k\le n$ we have
\begin{equation}\label{bbbinombb}
\binom{n}{j}b(n,k,j)=B(n,k,j).
\end{equation}
\end{corollary}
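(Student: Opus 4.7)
The plan is to read off the corollary directly from the two results already established in Section~\ref{sectionrefined}: the disjoint union decomposition (\ref{bbbnkusum}) and the invariance Theorem~\ref{theorembnku}.

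First I would apply (\ref{bbbnkusum}), which asserts that
\[
\mathcal{B}_{n,k,j}=\dot{\bigcup_{\substack{U\subseteq\{1,\ldots,n\}\\|U|=j}}}\mathcal{B}_{n,k,U},
\]
so that taking cardinalities gives
\[
B(n,k,j)=\sum_{\substack{U\subseteq\{1,\ldots,n\}\\|U|=j}}b(n,k,U).
\]
Next I would invoke Theorem~\ref{theorembnku}: every summand on the right equals the common value $b(n,k,j)$, since all subsets $U$ of size $j$ produce the same count. There are exactly $\binom{n}{j}$ such subsets, so the sum collapses to $\binom{n}{j}\,b(n,k,j)$, which is the claimed identity.

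There is no real obstacle here, since all the work has been done upstream: Proposition~\ref{littlebpropositionrecurrence} and Theorem~\ref{theorembnku} establish that $b(n,k,U)$ depends only on $|U|$, and (\ref{bbbnkusum}) is definitional. The corollary is just the bookkeeping step that converts the refined counts $b(n,k,U)$ into the gross count $B(n,k,j)$, thereby justifying the notation $b(n,k,j):=B(n,k,j)/\binom{n}{j}$ introduced in the abstract and confirming that this ratio is an integer.
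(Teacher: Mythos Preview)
Your proposal is correct and matches the paper's own reasoning exactly: the paper simply says ``From (\ref{bbbnkusum}) we obtain'' before stating the corollary, relying on Theorem~\ref{theorembnku} to make all $\binom{n}{j}$ summands equal.
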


\section{Connections with permutations of type $A$}\label{sectiontypea}

For given $n\ge0$ we define a map $F_{n}:\mathcal{A}_{n+1}\to\mathcal{B}_n$
in the following way: $F_{n}(\sigma)=\widetilde{\sigma}$, where
for $1\le i\le n$ we put
\begin{equation}
\widetilde{\sigma}_i:=\left\{\begin{array}{ll}
\sigma_{i+1}-\sigma_{1}&\hbox{if $\sigma_{i+1}<\sigma_{1}$,}\\
\sigma_{i+1}-1&\hbox{if $\sigma_{i+1}>\sigma_1$,}
\end{array}
\right.
\end{equation}
$\widetilde{\sigma}_{-i}:=-\widetilde{\sigma}_{i}$ and $\widetilde{\sigma}_0:=0$.
Note that $\widetilde{\sigma}_{i-1}>\widetilde{\sigma}_{i}$ if and only if $\sigma_{i}>\sigma_{i+1}$ for $1\le i\le n$,
so the number of descents in $(0,\widetilde{\sigma}_1,\ldots,\widetilde{\sigma}_n)$
is the same as in $(\sigma_1,\ldots,\sigma_{n+1})$.
It is easy to see that $F_{n}$ is one-to-one. Its image is the set of such
$\tau\in\mathcal{B}_{n}$ which satisfy the following property:
if $1\le i_1,i_2\le n$, $|\tau_{i_1}|<|\tau_{i_2}|$, $\tau_{i_2}<0$
then $\tau_{i_1}<0$.
Denote
\[
\mathcal{A}_{n,k,j}:=\{\sigma\in\mathcal{A}_{n,k}:\sigma_1=j\}.
\]
The cardinalities of these sets were studied by Conger~\cite{conger2010}, who denoted
$\left\langle n\atop k\right\rangle_j:=\left|\mathcal{A}_{n,k,j}\right|$.

From our remarks we have

\begin{theorem}\label{theorembijection}
For $0\le j,k\le n$ the function $F_n$ maps $\mathcal{A}_{n+1,k}$ into $\mathcal{B}_{n,k}$
and is a bijection from $\mathcal{A}_{n+1,k,j+1}$ onto $\mathcal{B}_{n,k,\{1,\ldots,j\}}$.
Consequently,
\begin{equation}\label{bbbnkjankj}
b(n,k,j)=\left|\mathcal{A}_{n+1,k,j+1}\right|.
\end{equation}
\end{theorem}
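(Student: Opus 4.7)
My plan is to (i) verify that $F_n$ is well-defined as a map into $\mathcal{B}_n$ with the correct sign-set image, (ii) check that the descent count and sign pattern really match, and (iii) exhibit an explicit inverse on the restricted domain. The identity for $b(n,k,j)$ will then follow from Theorem~\ref{theorembnku}.

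For step (i), I would fix $\sigma\in\mathcal{A}_{n+1,k,j+1}$, so $\sigma_1=j+1$. Then $\sigma_2,\ldots,\sigma_{n+1}$ is a permutation of $\{1,\ldots,j\}\cup\{j+2,\ldots,n+1\}$. Under the defining formula for $F_n$, the map $x\mapsto x-\sigma_1$ sends $\{1,\ldots,j\}$ bijectively onto $\{-j,\ldots,-1\}$, while $x\mapsto x-1$ sends $\{j+2,\ldots,n+1\}$ bijectively onto $\{j+1,\ldots,n\}$. Extending by $\widetilde\sigma_{-i}=-\widetilde\sigma_i$ and $\widetilde\sigma_0=0$ places $\widetilde\sigma$ in $\mathcal{B}_n$, and the set of indices $i$ with $\widetilde\sigma_i<0$ is precisely $\{i:|\widetilde\sigma_i|\in\{1,\ldots,j\}\}$, giving $F_n(\sigma)\in\mathcal{B}_{n,\ast,\{1,\ldots,j\}}$.

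For step (ii), the descent preservation $\widetilde\sigma_{i-1}>\widetilde\sigma_i$ iff $\sigma_i>\sigma_{i+1}$ is checked by splitting into three cases: both $\sigma_i,\sigma_{i+1}<\sigma_1$, both $>\sigma_1$, or exactly one on each side. In the first two cases the two values receive the same affine shift, so the order is trivially preserved; in the mixed case the smaller one becomes negative and the larger positive, so the comparison is again preserved. For the boundary comparison $\widetilde\sigma_0=0$ versus $\widetilde\sigma_1$, one uses that $\widetilde\sigma_1<0$ iff $\sigma_2<\sigma_1$. Hence $\mathrm{des}(F_n(\sigma))=\mathrm{des}(\sigma)$, so $F_n$ sends $\mathcal{A}_{n+1,k,j+1}$ into $\mathcal{B}_{n,k,\{1,\ldots,j\}}$.

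For step (iii) I would just invert the recipe: given $\tau\in\mathcal{B}_{n,k,\{1,\ldots,j\}}$, set $\sigma_1:=j+1$ and, for $1\le i\le n$, put $\sigma_{i+1}:=\tau_i+(j+1)$ if $\tau_i<0$, and $\sigma_{i+1}:=\tau_i+1$ if $\tau_i>0$. The sign assumption on $\tau$ guarantees that $\sigma_2,\ldots,\sigma_{n+1}$ takes values bijectively in $\{1,\ldots,n+1\}\setminus\{j+1\}$, so $\sigma\in\mathcal{A}_{n+1,k,j+1}$ and plainly $F_n(\sigma)=\tau$. Combining this bijection with Theorem~\ref{theorembnku} yields $b(n,k,j)=|\mathcal{B}_{n,k,\{1,\ldots,j\}}|=|\mathcal{A}_{n+1,k,j+1}|$. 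There is no real obstacle here; the only care needed is at the sign boundary, namely verifying the mixed case of step (ii) and the $i=1$ comparison with~$0$.
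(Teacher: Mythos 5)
Your proposal is correct and follows essentially the same route as the paper: the paper establishes the theorem via the remarks preceding it (descent preservation of $F_n$, injectivity, and identification of the image), and you simply fill in the details the paper leaves implicit, namely the case analysis for descent preservation and the explicit inverse on $\mathcal{B}_{n,k,\{1,\ldots,j\}}$.
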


In the rest of this section we briefly collect some properties of the numbers $b(n,k,j)=\left\langle n+1\atop k\right\rangle_{j+1}$,
most of them are immediate consequences of the results of Conger \cite{conger2010,congerthesis}.

\begin{proposition}
If $0\le k,j\le n$ then
\begin{align}
b(n,0,j)&=[j=0],\label{bbbcollecta}\\
b(n,n,j)&=[j=n],\label{bbbcollectb}\\
b(n,k,0)&=A(n,k),\label{bbbcollectc}\\
b(n,k,n)&=A(n,n-k),\label{bbbcollectd}\\
b(n,k,j)&=(k+1) b(n-1,k,j)+(n-k) b(n-1,k-1,j),\quad j<n,\label{bbbcollecte}\\
b(n,k,j)&=k b(n-1,k,j-1)+(n-k+1) b(n-1,k-1,j-1),\quad j>0,\label{bbbcollectf}\\
b(n,k,j)&=b(n,n-k,n-j).\label{bblitlesymmetry}
\end{align}
\end{proposition}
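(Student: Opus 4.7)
The plan is to derive each of the seven identities directly from results already established in the preceding sections. Specifically, (\ref{bbbcollecta})--(\ref{bbbcollectd}) will follow from Proposition \ref{bbbpropositionboundary}, the recurrences (\ref{bbbcollecte})--(\ref{bbbcollectf}) from Proposition \ref{littlebpropositionrecurrence}, and the symmetry (\ref{bblitlesymmetry}) by combining the corollary $\binom{n}{j}b(n,k,j)=B(n,k,j)$ in (\ref{bbbinombb}) with (\ref{bbnkjsymmetry}). The enabling point throughout is Theorem \ref{theorembnku}: since $b(n,k,U)$ depends only on $|U|$, in each step I am free to pick a convenient representative $U\subseteq\{1,\ldots,n\}$ with $|U|=j$.

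For the four boundary identities, I would fix any such $U$ and apply Proposition \ref{bbbpropositionboundary}. The condition $[U=\emptyset]$ in $b(n,0,U)=[U=\emptyset]$ becomes $[j=0]$, and $[U=\{1,\ldots,n\}]$ in $b(n,n,U)$ becomes $[j=n]$, which gives (\ref{bbbcollecta}) and (\ref{bbbcollectb}). For (\ref{bbbcollectc}) the only admissible $U$ is $\emptyset$, yielding $b(n,k,0)=A(n,k)$, and for (\ref{bbbcollectd}) the only admissible $U$ is $\{1,\ldots,n\}$, yielding $b(n,k,n)=A(n,n-k)$.

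For (\ref{bbbcollecte}), assuming $j<n$, I would select a $U$ of cardinality $j$ with $n\notin U$, so that (\ref{bbbnkurecurrence1}) applies. Since $U\subseteq\{1,\ldots,n-1\}$ with $|U|=j$, the values $b(n-1,k,U)$ and $b(n-1,k-1,U)$ coincide with $b(n-1,k,j)$ and $b(n-1,k-1,j)$ by Theorem \ref{theorembnku}, and (\ref{bbbcollecte}) drops out. Symmetrically, for (\ref{bbbcollectf}), assuming $j>0$, I would pick $U$ with $n\in U$ so that $U':=U\setminus\{n\}$ has cardinality $j-1$; then (\ref{bbbnkurecurrence2}) delivers (\ref{bbbcollectf}) after replacing $b(n-1,\cdot,U')$ by $b(n-1,\cdot,j-1)$.

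Finally, for the symmetry (\ref{bblitlesymmetry}), I would substitute $B(n,k,j)=\binom{n}{j}b(n,k,j)$ and $B(n,n-k,n-j)=\binom{n}{n-j}b(n,n-k,n-j)$ into (\ref{bbnkjsymmetry}) and cancel the equal factors $\binom{n}{j}=\binom{n}{n-j}$. There is no real obstacle here: the entire proposition is a compilation of immediate corollaries, and the only work is to cite the appropriate earlier result and, where needed, to choose a representative subset $U$ of the right cardinality containing or avoiding $n$.
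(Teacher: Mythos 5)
Your proposal is correct and follows essentially the same route as the paper, which derives the boundary identities from Proposition~\ref{bbbpropositionboundary}, the two recurrences from Proposition~\ref{littlebpropositionrecurrence} (choosing a representative $U$ avoiding or containing $n$ as appropriate), and the symmetry from (\ref{bbnkjsymmetry}) together with (\ref{bbbinombb}). Your write-up just makes explicit the role of Theorem~\ref{theorembnku} in licensing the choice of representative subset, which the paper leaves implicit.
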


\begin{proof}
These formulas are consequences of Proposition~\ref{bbbpropositionboundary},
Proposition~\ref{littlebpropositionrecurrence}, (\ref{bbnkjsymmetry}) and (\ref{bbbinombb})
(see formulas (3) and (8) in \cite{conger2010}).
Note that (\ref{bbbcollectf}) is absent in \cite{conger2010}.
\end{proof}

Applying (\ref{bbbcollecte}), with $j-1$ instead of $j$, and (\ref{bbbcollectf}) we obtain (see (10) in \cite{conger2010})

\begin{corollary} For $1\le j,k\le n$
\begin{equation}
b(n,k,j-1)-b(n,k,j)
=b(n-1,k,j-1)-b(n-1,k-1,j-1).\label{bbbdifference}
\end{equation}
\end{corollary}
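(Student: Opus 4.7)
The plan is essentially to follow the parenthetical hint: apply the two recurrences (\ref{bbbcollecte}) and (\ref{bbbcollectf}) to the two terms on the left-hand side and observe that the sum collapses.

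First I would check that both recurrences are available in the stated range $1\le j,k\le n$. Recurrence (\ref{bbbcollecte}) requires $j<n$, so with $j$ replaced by $j-1$ it requires only $j-1<n$, i.e.\ $j\le n$, which is satisfied. Recurrence (\ref{bbbcollectf}) requires $j>0$, i.e.\ $j\ge 1$, also satisfied. So for $1\le j,k\le n$ I may write
\begin{align*}
b(n,k,j-1) &= (k+1)\,b(n-1,k,j-1) + (n-k)\,b(n-1,k-1,j-1),\\
b(n,k,j) &= k\,b(n-1,k,j-1) + (n-k+1)\,b(n-1,k-1,j-1).
\end{align*}

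Subtracting the second identity from the first, the coefficient of $b(n-1,k,j-1)$ becomes $(k+1)-k=1$ and the coefficient of $b(n-1,k-1,j-1)$ becomes $(n-k)-(n-k+1)=-1$, which yields exactly (\ref{bbbdifference}). There is no real obstacle here; the only thing to be careful about is that the two recurrences share the \emph{same} right-hand pair of terms $b(n-1,k,j-1)$ and $b(n-1,k-1,j-1)$, which is precisely what makes the cancellation occur. The identity can therefore be viewed as a compatibility condition between the ``inserting $n$'' and ``inserting $-n$'' recurrences of Proposition~\ref{littlebpropositionrecurrence}, reflecting the fact that they both compute cardinalities in terms of the same smaller family of type $B$ permutations with $|U|=j-1$.
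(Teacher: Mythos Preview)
Your proof is correct and is exactly the approach the paper takes: apply (\ref{bbbcollecte}) with $j-1$ in place of $j$ and (\ref{bbbcollectf}) as written, then subtract.
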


Below we present tables for the numbers $b(n,k,j)$ for $n=0,1,2,3,4,5,6$
(they also appear in Appendix~A of \cite{congerthesis}):
\[
\begin{array}{c|c}
  	k\setminus j & 0  \\\hline
	0   &1
  	  	\end{array},\quad
\begin{array}{c|cc}
  	k\setminus j & 0 &1 \\\hline
	0   &1 & 0  \\
  	1	&0 & 1
  	\end{array},\quad
\begin{array}{c|ccc}
  	k\setminus j & 0 &1 &2 \\\hline
	0   &1 & 0 &0 \\
  	1	&1 & 2 &1 \\
  	2	&0 & 0 &1
  	\end{array},\quad
 	\begin{array}{c|cccc}
 k\setminus j & 0 &1 &2 &3\\\hline
 	0	&1 &0 &0 &0\\
 	1	&4 &4 &2 &1\\
 	2	&1 &2 &4 &4 \\
	3	&0 &0 &0 &1
\end{array},
\]
\[
\begin{array}{c|ccccc}
  		k\setminus j & 0 &1 &2 &3 &4\\\hline	
  		0	&1  &0  &0  &0  &0\\
  		1	&11 &8 &4 &2  &1\\
  		2	&11 &14 &16 &14 &11\\
  		3	&1  &2  &4 &8 &11\\
  		4	&0  &0  &0  &0  &1
  \end{array},\qquad
  \begin{array}{c|cccccc}
  	k\setminus j & 0 &1 &2 &3 &4 &5\\\hline
  	0	&1  &0   &0   &0   &0   &0   \\
  	1	&26 &16  &8  &4  &2  &1   \\
  	2	&66 &66 &60 &48 &36 &26  \\
  	3	&26 &36 &48 &60 &66 &66  \\
  	4	&1  &2  &4  &8  &16  &26  \\
  	5	&0  &0   &0   &0   &0   &1
  	\end{array},
\]
\[
\begin{array}{c|ccccccc}
  	k\setminus j & 0 &1 &2 &3 &4 &5&6\\\hline
0& 1&0&0&0&0&0&0\\
1&57&32&16&8&4&2&1\\
2&302&262&212&160&116&82&57\\
3&302&342&372&384&372&342&302\\
4&57&82&116&160&212&262&302\\
5&1&2&4&8&16&32&57\\
6&0&0&0&0&0&0&1
\end{array}.
\]

From (\ref{bbbinombb}), (\ref{bbbcollecte}) and (\ref{bbbcollectf}) we can provide new recurrence formulas for the numbers $B(n,k,j)$:

\begin{corollary}
For $0\le j,k\le n$ we have
\begin{align*}
B(n,k,j)&=\frac{(k+1)n}{n-j}B(n-1,k,j)+\frac{(n-k)n}{n-j}B(n-1,k-1,j),\\
\intertext{if $0\le j<n$ and}
B(n,k,j)&=\frac{k n}{j}B(n-1,k,j-1)+\frac{(n-k+1)n}{j}B(n-1,k-1,j-1),
\end{align*}
{if $0<j\le n$.}
\end{corollary}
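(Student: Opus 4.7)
The plan is to derive both recurrences by substituting the identity $B(n,k,j)=\binom{n}{j}b(n,k,j)$ from (\ref{bbbinombb}) into the two recurrences (\ref{bbbcollecte}) and (\ref{bbbcollectf}) for the little-$b$ numbers, and then rewriting the binomial coefficients at level $n$ in terms of those at level $n-1$ via the standard identities $\binom{n}{j}=\frac{n}{n-j}\binom{n-1}{j}$ (valid for $j<n$) and $\binom{n}{j}=\frac{n}{j}\binom{n-1}{j-1}$ (valid for $j>0$).

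For the first recurrence, I would fix $0\le j<n$ and multiply (\ref{bbbcollecte}) by $\binom{n}{j}$ to get
\[
B(n,k,j) = (k+1)\binom{n}{j}b(n-1,k,j) + (n-k)\binom{n}{j}b(n-1,k-1,j).
\]
Then using $\binom{n}{j}=\frac{n}{n-j}\binom{n-1}{j}$ on both terms produces factors $\frac{n}{n-j}B(n-1,k,j)$ and $\frac{n}{n-j}B(n-1,k-1,j)$ respectively, which is exactly the first claimed formula. For the second recurrence, I would analogously fix $0<j\le n$, multiply (\ref{bbbcollectf}) by $\binom{n}{j}$, and use $\binom{n}{j}=\frac{n}{j}\binom{n-1}{j-1}$ to rewrite each term as a multiple of $B(n-1,k,j-1)$ or $B(n-1,k-1,j-1)$.

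There is no real obstacle: both claims reduce to one-line manipulations once the right binomial identity is invoked on the correct side. The only minor thing to watch is the boundary behaviour $j=n$ in the first formula and $j=0$ in the second, where $n-j$ or $j$ vanishes in the denominator; but these cases are explicitly excluded by the hypotheses $j<n$ and $j>0$, so no further case analysis is needed.
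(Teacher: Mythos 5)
Your derivation is correct and is exactly the route the paper intends: the corollary is stated as an immediate consequence of (\ref{bbbinombb}) together with (\ref{bbbcollecte}) and (\ref{bbbcollectf}), and your use of $\binom{n}{j}=\frac{n}{n-j}\binom{n-1}{j}$ and $\binom{n}{j}=\frac{n}{j}\binom{n-1}{j-1}$ is the standard one-line manipulation that makes this work. The boundary remarks about $j<n$ and $j>0$ are also handled appropriately.
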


Now we introduce the following lexicographic order on the set $\{0,1,\ldots,n\}^2$:
$(k_1,j_1)\preceq (k_2,j_2)$ if and only if either $k_1<k_2$ or $k_1=k_2$, $j_1\ge j_2$.
This is a linear order, in which the successor of $(k,0)$, with $0\le k<n$, is $(k+1,n)$,
and for $1\le j\le n$ the successor of $(k,j)$ is $(k,j-1)$.
It turns out that for every $n\ge1$ the array $\big(b(n,k,j)\big)_{k,j=0}^{n}$
is lexicographically unimodal, cf. Theorem~7 in \cite{conger2010}.

\begin{proposition}\label{propositionbbblexicographic}
For every $n\ge1$ we have the following:

a) If either $0\le k<n/2$, $1\le j\le n$ or $k=n/2$, $n/2<j\le n$ then
\[
b(n,k,j-1)\ge b(n,k,j).
\]
This inequality is sharp unless either $k=0$, $2\le j\le n$ or $n$ is odd, $k=(n-1)/2$, $j=1$.

b) If either $1\le k\le n/2$, $0\le j\le n$ or $n$ is odd, $k=(n+1)/2$, $(n+1)/2\le j\le n$ then
\[
b(n,k-1,j)\le b(n,k,j)
\]
and this inequality is sharp unless $n$ is even, $k=n/2$, $j=0$.

c) The array of numbers $b(n,k,j)$, $0\le j,k\le n$,
is unimodal with respect to the order ``$\preceq$",
with the maximal value $b(n,n/2,n/2)$ if $n$ is even
and \[b(n,(n-1)/2,n)=b(n,(n+1)/2,0)\] if $n$ is odd.
\end{proposition}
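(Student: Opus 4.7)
I plan to prove parts (a), (b), (c) simultaneously by induction on $n$, with (c) following formally from (a), (b) and the symmetry (\ref{bblitlesymmetry}). The base cases $n=0,1$ are immediate from the explicit values (\ref{bbbcollecta})--(\ref{bbbcollectd}).

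For part (a), the main tool is the identity (\ref{bbbdifference}), which rewrites
\[
b(n,k,j-1)-b(n,k,j)=b(n-1,k,j-1)-b(n-1,k-1,j-1).
\]
The right-hand side is precisely the quantity whose non-negativity is part (b) at level $n-1$ applied to the point $(k,j-1)$. Under the hypotheses of (a), the shifted point $(k,j-1)$ lies in the validity range of (b) at level $n-1$: for $0<k<n/2$ we have $k\le(n-1)/2$, while for the exceptional arm $k=n/2$, $j>n/2$ (with $n$ even), the shifted point $(n/2,j-1)$ matches the exceptional arm of (b) at the odd level $n-1$. The corner $k=0$ is handled directly from $b(n,0,j)=[j=0]$, giving the listed equality for $j\ge 2$. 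The remaining exception ($n$ odd, $k=(n-1)/2$, $j=1$) reduces via (\ref{bbbdifference}) to $A(n-1,(n-1)/2)=A(n-1,(n-3)/2)$, which is the palindromic symmetry of the Eulerian numbers at the even level $n-1$.

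For part (b), I would expand $b(n,k,j)-b(n,k-1,j)$ using the recurrence (\ref{bbbcollecte}) (for $j<n$; the case $j=n$ is treated symmetrically via (\ref{bbbcollectf})), and then rewrite $(n-k+1)=(k+1)+(n-2k)$ to regroup the difference as
\[
(k+1)\bigl[b(n-1,k,j)-b(n-1,k-2,j)\bigr]+(n-2k)\bigl[b(n-1,k-1,j)-b(n-1,k-2,j)\bigr].
\]
For $1\le k\le n/2$ the coefficient $n-2k$ is non-negative, and two applications of inductive (b) at level $n-1$ (to the consecutive pairs $(k-1,k-2)$ and $(k,k-1)$, chained by transitivity) make both bracketed differences non-negative. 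I expect the principal obstacle to be the corner $k=n/2$ with $n$ even (and, symmetrically, $k=(n+1)/2$ with $n$ odd), where one of the needed instances of (b) at level $n-1$ lands on the boundary of its own validity range. In that corner the vanishing coefficient $n-2k=0$ collapses the identity to $(n/2+1)\bigl[b(n-1,n/2,j)-b(n-1,n/2-2,j)\bigr]$, and the remaining inequality must be established either by a further descent to level $n-2$ via the recurrence, or by folding the comparison through (\ref{bblitlesymmetry}) onto the form $b(n-1,k',j_1)\ge b(n-1,k',j_2)$ that is already covered by (a) at level $n-1$. Strictness is tracked through the inductive chain, with the listed equality at $n$ even, $k=n/2$, $j=0$ arising from $A(n,n/2)=A(n,n/2-1)$.

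Part (c) is then bookkeeping. Along the successor chain of $\preceq$, part (a) provides weak increase within each $k$-slice (going from $(k,n)$ down to $(k,0)$), and each transition $(k,0)\to(k+1,n)$ is in fact an equality by $b(n,k,0)=A(n,k)=A(n,n-1-k)=b(n,k+1,n)$. Past the centre the symmetry $b(n,k,j)=b(n,n-k,n-j)$ mirrors this into weak decrease on the upper half, which pins the maximum to the claimed location and closes the proof; the strict/non-strict exceptions in (a) and (b) propagate to strictness of the $\preceq$-unimodality in the natural way.
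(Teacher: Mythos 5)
Your handling of part (a) and part (c) matches the paper's proof: (a) is reduced, via the identity (\ref{bbbdifference}), to the inductive instance of (b) at level $n-1$, and (c) is assembled from the row-wise monotonicity, the transition equalities $b(n,k-1,0)=A(n,k-1)=A(n,n-k)=b(n,k,n)$, and the symmetry (\ref{bblitlesymmetry}). Where you genuinely diverge is part (b), and this is where your plan is heavier than it needs to be. The paper does not prove (b) by a separate computation at all: it observes that (b) at level $n$ is a formal consequence of (a) at level $n$, obtained by chaining the inequalities of (a) along the linear order $\preceq$ from $(k-1,j)$ down to $(k-1,0)$, across the equality $b(n,k-1,0)=b(n,k,n)$, and back along row $k$ to $(k,j)$ (with the symmetry handling the columns below the middle when $k=n/2$). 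Hence the only statement that requires induction is (a), and the whole induction closes through (\ref{bbbdifference}) alone. Your alternative route for (b) --- expanding with (\ref{bbbcollecte}) and regrouping via $(n-k+1)=(k+1)+(n-2k)$ --- is algebraically correct, but it manufactures precisely the corner case you flag ($n$ even, $k=n/2$, where you need $b(n-1,n/2,j)\ge b(n-1,n/2-2,j)$ with $n-1$ odd), and you leave that inequality as a sketch with two candidate fixes rather than closing it. It can be closed (for instance, fold $b(n-1,n/2,j)=b(n-1,n/2-1,n-1-j)$ by symmetry and place both points on the weakly increasing half of the $\preceq$-chain at level $n-1$, invoking (c) there), but the paper's shortcut makes all of this unnecessary; I would adopt it. Your accounting of the equality cases ($A(2k,k-1)=A(2k,k)$ for the exception in (a), $A(n,n/2)=A(n,n/2-1)$ for the one in (b)) agrees with the paper's.

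One remark independent of either argument: the stated location of the maximum in (c) for odd $n$, namely $b(n,(n-1)/2,n)=b(n,(n+1)/2,0)$, appears to be a misprint for $b(n,(n-1)/2,0)=b(n,(n+1)/2,n)$; in the $n=3$ table the maximum $4$ is attained at $(1,0)$ and $(2,3)$, not at $(1,3)$ and $(2,0)$. Both your proof and the paper's in fact locate the maximum at the middle of the $\preceq$-chain, so neither is affected.
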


\begin{proof}
First we note that (a) implies (c) as a consequence of the symmetry (\ref{bblitlesymmetry})
and the equality
\[
b(n,k-1,0)=A(n,k-1)=A(n,n-k)=b(n,k,n).
\]
Similarly we get (b).

Now assume that the statement holds for $n-1$.
If either $k<n/2$ or $k=n/2$, $n/2<j$ then, due to (3), the right hand side of (\ref{bbbdifference}) is nonnegative
which proves (a), (b) and consequently (c) for $n$.
Moreover, it is positive unless $j=1$, $n-1=2k$, as $A(2k,k-1)=A(2k,k)$.
\end{proof}

Now we note two summation formulas (see (4) and (5) in \cite{conger2010}).

\begin{proposition}
For $0\le j,k\le n$ we have
\begin{align}
\sum_{j=0}^{n}b(n,k,j)&=A(n+1,k),\label{bbbsumj}\\
\sum_{k=0}^{n}b(n,k,j)&=n!.
\end{align}
\end{proposition}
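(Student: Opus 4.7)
The plan is to deduce both identities directly from the bijection established in Theorem~\ref{theorembijection}, which gives $b(n,k,j)=|\mathcal{A}_{n+1,k,j+1}|$. Once this identification is in hand, each sum becomes a statement about how the sets $\mathcal{A}_{n+1,k,m}$ partition a larger family of ordinary type $A$ permutations.

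For the first identity, I would fix $k$ and observe that the sets $\mathcal{A}_{n+1,k,m}$, as $m$ ranges over $\{1,2,\ldots,n+1\}$, form a disjoint decomposition of $\mathcal{A}_{n+1,k}$ according to the value of $\sigma_1$. Reindexing by $m=j+1$ with $0\le j\le n$ and taking cardinalities gives
\[
\sum_{j=0}^{n}b(n,k,j)=\sum_{j=0}^{n}\bigl|\mathcal{A}_{n+1,k,j+1}\bigr|=\bigl|\mathcal{A}_{n+1,k}\bigr|=A(n+1,k).
\]

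For the second identity, I would fix $j$ (equivalently, fix the first entry $m=j+1$) and now partition the set $\{\sigma\in\mathcal{A}_{n+1}:\sigma_1=j+1\}$ according to the number of descents. The sum $\sum_{k=0}^{n}b(n,k,j)$ equals the total number of such permutations, and since fixing the first value of a permutation in $\mathcal{A}_{n+1}$ leaves the remaining $n$ positions to be filled arbitrarily by a permutation of the other $n$ values, this total is $n!$. Alternatively, one can argue purely from Corollary~\ref{bbbinombb} combined with (\ref{bbnkjsum2}): dividing $\sum_{k=0}^{n}B(n,k,j)=\binom{n}{j}n!$ by $\binom{n}{j}$ yields the same result.

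There is no real obstacle here: the entire content of both formulas is already packaged in Theorem~\ref{theorembijection}, so the proofs reduce to the trivial partitions of $\mathcal{A}_{n+1,k}$ (by first entry) and of $\{\sigma\in\mathcal{A}_{n+1}:\sigma_1=j+1\}$ (by descent count). The only small care is to reindex $m=j+1$ correctly so that $j$ runs over $\{0,1,\ldots,n\}$ exactly as required.
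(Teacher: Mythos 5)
Your proposal is correct and follows essentially the same route as the paper: the first identity is obtained exactly as in the paper, by applying $b(n,k,j)=|\mathcal{A}_{n+1,k,j+1}|$ to the decomposition of $\mathcal{A}_{n+1,k}$ according to the first entry. For the second identity the paper simply divides $\sum_{k}B(n,k,j)=\binom{n}{j}\,n!$ by $\binom{n}{j}$ (your stated alternative), while your primary argument --- counting permutations of $\mathcal{A}_{n+1}$ with prescribed first entry $j+1$ by partitioning over the descent number --- is an equally valid, slightly more self-contained combinatorial rephrasing of the same fact.
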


\begin{proof}
For (\ref{bbbsumj}) we apply (\ref{bbbnkjankj}) to the following decomposition:
\[
\mathcal{A}_{n+1,k,1}\dot{\cup}\mathcal{A}_{n+1,k,2}\dot{\cup}\ldots\dot{\cup}\mathcal{A}_{n+1,k,n+1}
=\mathcal{A}_{n+1,k}.
\]
The latter identity is a consequence of (\ref{bbnkjsum2}) and (\ref{bbbinombb}).
\end{proof}

It turns out that (\ref{aformula}) can be generalized to a formula which expresses
the numbers $b(n,k,j)$, see Theorem~1 in \cite{conger2010}.

\begin{theorem}
For any $0\le j,k\le n$ we have
\begin{equation}\label{bbbformula}
b(n,k,j)=\sum_{i=0}^{k}(-1)^{k-i}\binom{n+1}{k-i}i^{j}(i+1)^{n-j},
\end{equation}
under convention that $0^0=1$.
\end{theorem}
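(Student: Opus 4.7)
My plan is to prove the formula by induction on $n$, using recurrence \eqref{bbbcollectf} to propagate through all $j\ge 1$ and the boundary condition \eqref{bbbcollectc} combined with the classical Eulerian formula \eqref{aformula} to seed the case $j=0$. The base case $n=0$ is immediate: the only value to check is $b(0,0,0)=1$, and the right-hand side collapses to $\binom{1}{0}\cdot 0^{0}\cdot 1^{0}=1$ under the stated convention.

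For the inductive step, I would assume \eqref{bbbformula} holds at level $n-1$ for all $k,j$ and split on $j$. If $j=0$ then $i^{j}=1$ for every $i\ge 0$ (since $0^{0}=1$), and the right-hand side of \eqref{bbbformula} becomes $\sum_{i=0}^{k}(-1)^{k-i}\binom{n+1}{k-i}(i+1)^{n}=A(n,k)$ by \eqref{aformula}; by \eqref{bbbcollectc} this equals $b(n,k,0)$, as desired. If $1\le j\le n$, I apply \eqref{bbbcollectf} and substitute the inductive hypothesis for both $b(n-1,k,j-1)$ and $b(n-1,k-1,j-1)$. After extending the second sum to range over $i\in\{0,\ldots,k\}$ (which is harmless since $\binom{n}{-1}=0$), the two terms combine into a single sum whose $i$-th summand carries the bracket $k\binom{n}{k-i}-(n-k+1)\binom{n}{k-i-1}$, while the target right-hand side of \eqref{bbbformula} contributes the factor $i\binom{n+1}{k-i}$. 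So the inductive step reduces to the single binomial identity
$$i\binom{n+1}{k-i} \;=\; k\binom{n}{k-i}-(n-k+1)\binom{n}{k-i-1}.$$
Expanding $\binom{n+1}{k-i}=\binom{n}{k-i}+\binom{n}{k-i-1}$ by Pascal's rule and writing $m=k-i$, this in turn collapses to the absorption identity $m\binom{n}{m}=(n-m+1)\binom{n}{m-1}$, which is standard.

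The argument is essentially algebraic rather than combinatorial, so the main obstacle is just the bookkeeping: matching the two ranges of summation, absorbing the extra factor $i$ into $i^{j-1}$ to produce $i^{j}$, and applying Pascal plus absorption at the right moment. One deliberate choice is to use \eqref{bbbcollectf} rather than \eqref{bbbcollecte}; with the former the boundary to be checked separately is $j=0$, which reduces directly to \eqref{aformula}, whereas with the latter one would need to handle $j=n$ separately, requiring an extra rewrite via the symmetry \eqref{bblitlesymmetry}.
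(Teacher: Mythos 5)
Your proof is correct and is essentially the paper's own argument: the paper proves the theorem by the same induction, citing \eqref{aformula} and the recurrence \eqref{bbbcollectf}, and leaves exactly the bookkeeping you carried out (extending the range of the second sum, Pascal plus absorption) to the reader. The only discrepancy is that the paper names \eqref{bbbcollectd} as the boundary ingredient whereas you use \eqref{bbbcollectc}; since \eqref{bbbcollectf} only applies for $j>0$, the case $j=0$ must be seeded as you do, so your choice is the coherent reading and your base case is the right one.
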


\begin{proof}
It can be proved by induction by applying (\ref{aformula}), (\ref{bbbcollectd}) and (\ref{bbbcollectf}).
\end{proof}

From (\ref{bbbformula}) and (\ref{bbbinombb}) we can derive a formula for the numbers $B(n,k,j)$.

\begin{corollary}
For any $0\le j,k\le n$ we have
\begin{equation}
B(n,k,j)=\binom{n}{j}\sum_{i=0}^{k}(-1)^{k-i}\binom{n+1}{k-i}i^{j}(i+1)^{n-j},
\end{equation}
under convention that $0^0=1$.
\end{corollary}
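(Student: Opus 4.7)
The plan is immediate: this corollary is just the product of two results already established in the excerpt, namely the explicit formula
\[
b(n,k,j)=\sum_{i=0}^{k}(-1)^{k-i}\binom{n+1}{k-i}i^{j}(i+1)^{n-j}
\]
from equation (\ref{bbbformula}), and the identity
\[
\binom{n}{j}\,b(n,k,j)=B(n,k,j)
\]
from equation (\ref{bbbinombb}). Since both are available as prior results, the corollary requires no new combinatorial or analytic argument.

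Concretely, I would first invoke (\ref{bbbinombb}) to rewrite $B(n,k,j)$ as $\binom{n}{j}b(n,k,j)$, which is valid for all $0\le j,k\le n$. Then I would substitute the closed form (\ref{bbbformula}) for $b(n,k,j)$ into this expression, pulling the factor $\binom{n}{j}$ out in front of the sum over $i$. The resulting identity is exactly the claim, and the convention $0^0=1$ is inherited from the convention already fixed for (\ref{bbbformula}).

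There is essentially no obstacle here. The only minor thing worth noting is that (\ref{bbbinombb}) was stated for $0\le j,k\le n$, which is precisely the range appearing in the corollary, so no extension or edge-case discussion is needed. The proof is therefore a one-line deduction and I would write it as such, with no inductive or bijective work required.
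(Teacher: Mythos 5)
Your proposal is correct and matches the paper's own derivation exactly: the corollary is stated as an immediate consequence of combining the closed form (\ref{bbbformula}) for $b(n,k,j)$ with the identity (\ref{bbbinombb}). Nothing further is needed.
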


Now we can prove Worpitzky type formula:

\begin{proposition}
For $0\le j\le n$ we have
\begin{equation}\label{bbbworpitzky}
\sum_{k=0}^{n} b(n,k,j)\binom{x+n-k}{n}
=x^{j}(1+x)^{n-j}.
\end{equation}
\end{proposition}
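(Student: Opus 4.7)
The plan is to induct on $n$. The base case $n=0$, $j=0$ reduces to $\binom{x}{0}=1$ on both sides. For the inductive step I will use one of the two recurrences (\ref{bbbcollecte}) or (\ref{bbbcollectf}), whichever applies; since (\ref{bbbcollectf}) covers every $j\ge 1$ and (\ref{bbbcollecte}) covers every $j\le n-1$, at least one is available for every admissible $(n,j)$, so the two cases together exhaust all values.

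Take the case $j>0$ first. Substituting (\ref{bbbcollectf}) into the left-hand side, reindexing the second summand by $k\mapsto k+1$, and collecting the coefficient of $b(n-1,k,j-1)$, one is left with
\[
\sum_{k}b(n-1,k,j-1)\left[k\binom{x+n-k}{n}+(n-k)\binom{x+n-k-1}{n}\right].
\]
Setting $y=x+n-k$ and using $\binom{y}{n}=\binom{y-1}{n}+\binom{y-1}{n-1}$ together with $n\binom{y-1}{n}=(y-n)\binom{y-1}{n-1}$, the bracket collapses to $x\binom{x+n-1-k}{n-1}$. The inductive hypothesis applied with parameters $(n-1,j-1)$ then produces
\[
x\cdot x^{j-1}(1+x)^{n-j}=x^{j}(1+x)^{n-j},
\]
as required.

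The case $j<n$ is entirely parallel but starts from (\ref{bbbcollecte}); after the analogous reindexing the bracket becomes $(x+1)\binom{x+n-1-k}{n-1}$, and the inductive hypothesis for $(n-1,j)$ yields $(x+1)\cdot x^{j}(1+x)^{n-1-j}=x^{j}(1+x)^{n-j}$. There is no serious obstacle in this approach. The only mildly delicate point is the binomial-coefficient identity which merges the two terms of the bracket into a single multiple of $\binom{x+n-1-k}{n-1}$ with precisely the right factor ($x$ via (\ref{bbbcollectf}), or $x+1$ via (\ref{bbbcollecte})) to account for the extra power of $x$ or $1+x$ gained in passing from $n-1$ to $n$ on the right-hand side.
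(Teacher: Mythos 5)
Your proof is correct, but it takes a genuinely different route from the paper's. The paper proves (\ref{bbbworpitzky}) by combining the explicit alternating-sum formula (\ref{bbbformula}) with the inversion identity $\sum_{k}(-1)^{k-i}\binom{n+1}{k-i}\binom{x+n-k}{n}=[x=i]$ (formula (5.25) in \cite{gkp}): this verifies the identity at the $n+1$ integer points $x\in\{0,1,\ldots,n\}$, and since both sides are polynomials of degree at most $n$ the identity extends to all real $x$. You instead induct on $n$ directly from the recurrences (\ref{bbbcollecte}) and (\ref{bbbcollectf}); I checked the key step, and the bracket does collapse as you claim --- with $y=x+n-k$ one gets $k\binom{y}{n}+(n-k)\binom{y-1}{n}=n\binom{y-1}{n}+k\binom{y-1}{n-1}=(y-n+k)\binom{y-1}{n-1}=x\binom{x+n-1-k}{n-1}$ in the first case, and $(y-n+k+1)\binom{y-1}{n-1}=(x+1)\binom{x+n-1-k}{n-1}$ in the second (the stray boundary terms $b(n-1,-1,\cdot)$ and $b(n-1,n,\cdot)$ vanish by the paper's conventions, so the reindexing is harmless). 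Your argument buys self-containedness: it needs only the recurrences and the trivial base case, proves the polynomial identity for all $x$ at once without the interpolation step, and in fact could be run \emph{before} (\ref{bbbformula}), which would then follow from (\ref{bbbworpitzky}) by binomial inversion. The paper's argument is shorter on the page but only because it leans on (\ref{bbbformula}), whose own proof is merely sketched as an induction of essentially the same flavor as yours.
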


\begin{proof}
If $x\in\{0,1,\ldots,n\}$ then
\[
\sum_{k=0}^{n}(-1)^{k-i}\binom{n+1}{k-i}\binom{x+n-k}{n}=[x=i]
\]
(see (5.25) in \cite{gkp}).
Applying (\ref{bbbformula}) we see that (\ref{bbbworpitzky})
holds for $x\in\{0,1,\ldots,n\}$ (see formula (4.18) in \cite{congerthesis}).
Since the left hand side is a polynomial of degree
at most $n$, this implies that (\ref{bbbworpitzky}) is true for all $x\in\mathbb{R}$.
\end{proof}

\section{Real rootedness}\label{sectionrealroots}

For $0\le j\le n$ denote
\begin{equation}
p_{n,j}(x):=\sum_{k=0}^{n} b(n,k,j)x^k
\end{equation}
so that
\begin{equation}\label{cpitalplittlep}
P_{n,j}(x)=\binom{n}{j}p_{n,j}(x).
\end{equation}
By Proposition~\ref{propositionprecurrence} we have the following recurrence:
\begin{align}
p_{n,j}(x) &=\frac{n-j}{n} (1+xn-x)p_{n-1,j}(x) + \frac{n-j}{n}(x-x^2)p_{n-1,j}'(x)\label{bbpnjrecurrence2}\\
&+ jxp_{n-1,j-1}(x)+\frac{j}{n}(x-x^2)p_{n-1,j-1}'(x),\nonumber
\end{align}
with the initial conditions: $p_{n,0}(x)=P^{\mathrm{A}}_{n}(x)$ for $n\ge0$ and
$p_{n,n}(x)=x P^{\mathrm{A}}_{n}(x)$ for $n\ge1$.
By (\ref{bbbnkjankj}) the polynomial $p_{n,j}(x)$ coincides with $A_{n+1,j+1}(x)$
considered by Br\"{a}nd\'{e}n \cite{branden2}, Example 7.8.8.
He noted that
\begin{equation}\label{brandenrecurrence}
p_{n,j}(x)=\sum_{i=0}^{j-1}xp_{n-1,i}(x)+\sum_{i=j}^{n-1}p_{n-1,i}(x),
\end{equation}
which is equivalent to
\begin{equation}
b(n,k,j)=\sum_{i=0}^{j-1} b(n-1,k-1,i)+\sum_{i=j}^{n-1} b(n-1,k,i)
\end{equation}
(see (9) in \cite{conger2010}).
Note that if $0\le j<n$ then $\deg p_{n,j}(x)=n-1$
and $\deg p_{n,n}(x)=n$. In fact, $p_{n,n}(x)=x p_{n,0}(x)$.
Conger~\cite{conger2010}, Theorem~5, proved that all $p_{n,j}(x)$ have only real roots. It turns out that they
admit a much stronger property.

Let $f,g\in\mathbb{R}[x]$ be real-rooted polynomials with positive leading coefficients.
We say that $f$ is an \textit{interleaver} of $g$, which we denote $f\ll g$, if
\[
\ldots\le\alpha_2\le\beta_2\le\alpha_1\le\beta_1,
\]
where $\{\alpha_i\}_{i=1}^{m}$, $\{\beta_i\}_{i=1}^{n}$
are the roots of $f$ and $g$ respectively.
A sequence $\{f_i\}_{i=0}^{n}$ of real-rooted polynomials is called
\textit{interlacing} if $f_i\ll f_j$ whenever $0\le i<j\le n$.

From \cite{branden2}, Example~7.8.8 and (\ref{cpitalplittlep})
we have the following property
of the polynomials $p_{n,j}(x)$ and $P_{n,j}(x)$:

\begin{theorem}\label{theorempnjrealroots}
For every $n\ge1$
the sequence $\{p_{n,j}(x)\}_{j=0}^{n}$ is interlacing.
Consequently, for any $c_0,c_{1},\ldots,c_n\ge0$ the polynomial
\[
c_{0}p_{n,0}(x)+c_{1}p_{n,1}(x)+\ldots+c_{n}p_{n,n}(x)
\]
has only real roots.

The same statement holds for the polynomials $P_{n,j}(x)$.
\end{theorem}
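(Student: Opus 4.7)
My plan is to reduce the statement to Br\"and\'en's Example~7.8.8 in \cite{branden2} by means of the identification already established in Theorem~\ref{theorembijection}. Concretely, since $b(n,k,j)=\left\langle n+1\atop k\right\rangle_{j+1}$, the polynomial $p_{n,j}(x)$ agrees with what Br\"and\'en denotes $A_{n+1,j+1}(x)$, and Example~7.8.8 of \cite{branden2} asserts precisely that the sequence $\{p_{n,j}(x)\}_{j=0}^{n}$ is interlacing. So the first step of the proof is simply to record this identification explicitly and cite the example.

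To make the argument self-contained I would then sketch the induction behind Br\"and\'en's statement. The base case $n=1$ reduces to $1\ll x$, which is trivial. For the inductive step, assume $\{p_{n-1,i}(x)\}_{i=0}^{n-1}$ is interlacing. The recurrence (\ref{brandenrecurrence}) expresses $p_{n,j}(x)$ as a sum, with $0/1$ coefficients, of the polynomials $xp_{n-1,i}(x)$ (for $i<j$) and $p_{n-1,i}(x)$ (for $i\ge j$). One then invokes the general ``compatible polynomials'' machinery of \cite{branden2} (the standard Proposition~7.8.2 and the surrounding lemmas): if the source sequence is interlacing and the building blocks $\{p_{n-1,i}(x),\,xp_{n-1,i}(x)\}$ are mutually compatible, the telescoping transformation above produces a new interlacing sequence. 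Verifying this compatibility, i.e.\ that the roots of successive $p_{n,j}(x)$ really do interleave in the required order, is where the substantive work lies; I expect this to be the main obstacle if one does not simply invoke \cite{branden2} as a black box.

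Once interlacing of $\{p_{n,j}(x)\}_{j=0}^{n}$ is in hand, the statement about arbitrary nonnegative linear combinations is a classical consequence of the Hermite--Biehler / Obreschkoff circle of ideas: a finite family of real-rooted polynomials (with positive leading coefficients) is interlacing if and only if every nonnegative linear combination has only real roots. Therefore $c_{0}p_{n,0}(x)+\ldots+c_{n}p_{n,n}(x)$ is real-rooted whenever $c_0,\ldots,c_n\ge0$.

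Finally, to transfer the statement to the polynomials $P_{n,j}(x)$, I would appeal to (\ref{cpitalplittlep}): $P_{n,j}(x)=\binom{n}{j}p_{n,j}(x)$ is obtained from $p_{n,j}(x)$ by multiplication with a strictly positive constant, so the interlacing sequence $\{p_{n,j}(x)\}_{j=0}^{n}$ becomes the interlacing sequence $\{P_{n,j}(x)\}_{j=0}^{n}$, and any nonnegative combination $\sum c_{j}P_{n,j}(x)=\sum (c_{j}\binom{n}{j})p_{n,j}(x)$ is again a nonnegative combination of the $p_{n,j}$'s and hence real-rooted. This yields the second half of the theorem and, in particular, the generalizations of Corollary~3.7 in \cite{brenti} and Corollary~6.9 in \cite{branden} mentioned in the introduction.
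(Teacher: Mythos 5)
Your proposal is correct and follows essentially the same route as the paper: the paper likewise obtains the interlacing of $\{p_{n,j}(x)\}_{j=0}^{n}$ directly from Example~7.8.8 of \cite{branden2} (via the identification $p_{n,j}=A_{n+1,j+1}$ coming from Theorem~\ref{theorembijection}), deduces real-rootedness of nonnegative combinations from the standard equivalence for interlacing families, and transfers everything to $P_{n,j}(x)$ through the positive-constant relation (\ref{cpitalplittlep}). The only difference is that you additionally sketch the induction behind Br\"and\'en's example using the recurrence (\ref{brandenrecurrence}), which the paper leaves entirely to the citation.
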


Note that Theorem~\ref{theorempnjrealroots} generalizes Corollary~3.7 in \cite{brenti} and Corollary~6.9 in \cite{branden}.

\end{document}